\DeclareMathAlphabet{\cat}{OT1}{cmss}{m}{sl}
\newtheorem{theorem}{Theorem}[section]
\newtheorem{proposition}[theorem]{Proposition}
\newtheorem{lemma}[theorem]{Lemma}
\newtheorem{corollary}[theorem]{Corollary}
\theoremstyle{definition}
\newtheorem{remark}[theorem]{Remark}
\newtheorem{example}[theorem]{Example}
\newcommand{\gmu}{\boldsymbol{\mu}}
\title[$R$-triviality of ${\rm F}_4$: First Tits Construction] 
{$R$-triviality of Groups of Type ${\rm F}_4$ Arising from the First Tits Construction}
\author
[S. Alsaody]
{Seidon Alsaody}
\thanks{S.\ Alsaody wishes to thank PIMS for their financial support. }
\author
[V. Chernousov]
{Vladimir Chernousov}
\thanks{ V. Chernousov was partially supported by  Alexander von Humboldt Foundation 
and an NSERC research grant} 
\author
[A. Pianzola]
{Arturo Pianzola}
\thanks{A. Pianzola wishes to thank NSERC and CONICET for their
continuous support}
\address
{Department of Mathematical Sciences, University of Alberta,
Edmonton, Alberta, Canada T6G 2G1}
\email{alsaody@ualberta.ca}
\address
{Department of Mathematical Sciences, University of Alberta,
Edmonton, Alberta, Canada T6G 2G1}
\email {vladimir@ualberta.ca}
\address
{Department of Mathematical Sciences, University of Alberta,
Edmonton, Alberta, Canada T6G 2G1}
\email{a.pianzola@ualberta.ca}
\begin{document}

\begin{abstract} Any group of type ${\rm F}_4$ is obtained as the automorphism group of an Albert algebra. 
We prove that such a group is $R$-trivial whenever the Albert algebra is obtained from the first Tits construction. 
Our proof uses cohomological techniques and the corresponding result on the structure group of such Albert algebras.
\end{abstract}

\maketitle

\section{Introduction}
The notion of path connectedness for topological spaces is natural and well-known. In algebraic geometry, this notion corresponds to that of \emph{$R$-triviality}. Recall that the notion of 
$R$-equivalence, introduced by Manin in \cite{M}, is an important
birational invariant of an algebraic variety defined over an arbitrary field  and is defined as 
follows: if ${\bf X}$ is a variety over a field $K$ with ${\bf X}(K)\neq\emptyset$, two $K$-points $x,y\in X(K)$ are called \emph{elementarily $R$-equivalent} if there is a path from $x$ to $y$, i.e.\ if there exists a rational 
map $f:\mathbb A^1\dashrightarrow {\bf X}$ defined at $0,1$ and mapping
0 to $x$ and $1$ to $y$. This generates an equivalence relation $R$ on ${\bf X}(K)$ and one
can consider the set ${\bf X}(K)/R$. 

If ${\bf X}$ has in addition a group structure, i.e. ${\bf X}={\bf G}$ is an algebraic 
group over a field $K$, then the set ${\bf G}(K)/R$ has a natural structure of an (abstract) group and this 
gives rise to a functor 
$$
{\bf G}/R: Fields/K \longrightarrow  Groups,
$$ where $Fields/K$ is the 
category of field extensions of $K$, and ${\bf G}/R$ is given by $L/K \to {\bf G}(L)/R$.  
One says that ${\bf G}$ is \emph{$R$-trivial} if ${\bf G}(L)/R=1$ 
for all field extensions $L/K$.

The computation of the group ${\bf G}(K)/R$ is known only in a few cases. Here are two examples.

\smallskip

\noindent
{\it Example $1$}: Let $A$ be a central simple algebra over a field $K$ and ${\bf G} = {\bf GL}(1,A)$ the
group of ``invertible elements of $A$''. Denote by ${\bf H}$ the kernel ${\bf SL}(1,A)$ of the reduced
norm homomorphism ${\rm Nrd} : {\bf G} \to {\bf G}_m$. 
Then by \cite{V} the group ${\bf G}(K)/R$ equals $A^\times/[A^\times,A^\times]$, 
which is naturally isomorphic to the Whitehead
group ${\rm K}_1(A)$ in algebraic K-theory (cf., \cite[\S\,3]{Mil}). The group ${\bf H}(K)/R$
 is known as the reduced Whitehead group ${\rm SK}_1(A)$. The groups
${\rm K}_1(A)$ and ${\rm SK}_1(A)$ have been extensively studied  in the framework of 
algebraic $K$-Theory. 

\smallskip

\noindent
{\it Example $2$} (cf. \cite{CM1}): let $f$ be a quadratic form over a field $F$ of characteristic not 2
and let ${\bf G}={\bf Spin}(f)$.  Consider the complex
$$
\cdots \longrightarrow \coprod_{x\in {\bf X}_{(1)}} K_2 F(x) \stackrel{\partial}{\longrightarrow} 
\coprod_{x\in {\bf X}_{(0)}} K_1 F(x) \stackrel{N}{\longrightarrow} K_1 F=F^\times
$$
where ${\bf X}$ is the projective quadric corresponding to $f$. Let $A_0({\bf X},K_1)$ be the last 
homology group of this complex. Then ${\bf G}(F)/R\simeq A_0({\bf X},K_1)$.
This result allows one to use the machinery of algebraic $K$-theory when dealing with groups of $R$-equivalence classes and is useful in both directions. For instance,
if $f$ is a Pfister form, then ${\bf G}$ is a rational variety, hence ${\bf G}(F)/R=1$ and therefore 
$A_0({\bf X},K_1)=0$. This 
triviality result is originally due to M.\ Rost and was used in the proof of the Milnor conjecture. 
Conversely, if $f$ is a generic quadratic form over the field $F=k(t_1,\ldots,t_m)$, where
$t_1,\ldots,t_n$ are independent variables over the field $k$, then using techniques of Chow groups
one shows that $A_0({\bf X},K_1)=0$, implying ${\bf G}(F)/R=1$.

\medskip

These two examples show that the group ${\bf G}(K)/R$ encodes deep arithmetic properties
of ${\bf G}$, and that its computation is of great importance in the theory of algebraic groups. 
It is in general a difficult question to determine whether ${\bf G}$ is $R$-trivial. For  algebraic groups, 
this is the case if ${\bf G}$ is \emph{rational}, i.e.\ birationally equivalent to affine space (cf. \cite{CTS}).
Also, it is known that semisimple groups of rank at most 2 are rational, hence $R$-trivial (cf. \cite{V}).

The group ${\bf G}(K)/R$ for algebraic tori was computed in \cite{CTS} and for adjoint
classical semisimple groups in \cite{Merkurjev1}. The latter computation enables one to construct
first examples of non-rational adjoint groups.
For simply connected semisimple groups the computation of the group of
$R$-equivalence classes is known only for  type ${\rm A}_n$ (cf. \cite{CM}), for
spinor groups ${\bf Spin}(f)$ (see the above Example $2$) and in some easy cases when this ${\bf G}/R$ is 
trivial because the corresponding
algebraic group is rational (types ${\rm C}_n$ and ${\rm G}_2$). 

The case of exceptional groups is widely open.
Beyond ${\rm G}_2$, which is rational by
the above, not much is known. In particular it is not known whether groups of type ${\rm F}_4$ are $R$-trivial. These groups arise as automorphism groups of Albert algebras (i.e.\ simple exceptional Jordan algebras, defined below).
Recall that any Albert algebra can be obtained through the first or the second Tits construction.
The following theorem is the main result of our paper.

\begin{theorem}
Any algebraic group of type ${\rm F}_4$ over a field $K$ of characteristic not 2 or 3 arising from the first Tits construction is $R$-trivial.
\end{theorem}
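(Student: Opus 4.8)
The plan is to realise $\mathbf{G}=\operatorname{Aut}(J)$, a group of type $\mathrm{F}_4$, as a point stabiliser inside the structure group and then to transport $R$-triviality from the latter. Since $\mathbf{G}/R$ is a functor, it suffices to show that for every extension $L/K$ every $L$-point of $\mathbf{G}$ is $R$-equivalent to the identity; because the first Tits construction (and the hypothesis $\operatorname{char}\neq 2,3$, needed for the cubic norm structure) is stable under base change, one may argue uniformly over all such $L$. Let $N$ be the cubic norm of $J$ and let $\mathbf{S}=\operatorname{Str}(J)$ be the group of norm similarities: a connected reductive group whose derived group $\operatorname{Isom}(N)$, the isometry group of $N$, is simply connected of type $\mathrm{E}_6$, and whose radical is the scalar torus $\gm$. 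The first, classical, point is that a norm isometry fixing the unit $1\in J$ is a Jordan automorphism, so that $\mathbf{G}=\{g\in\mathbf{S}: g(1)=1\}$ is exactly the stabiliser of $1$.

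Next I would study the homogeneous space $\mathbf{S}/\mathbf{G}$. The orbit map $\pi\colon\mathbf{S}\to J$, $\pi(g)=g(1)$, is dominant with image the open set $\mathbf{U}=\{x\in J: N(x)\neq 0\}$ of invertible elements, since over a separable closure $\mathbf{S}$ acts transitively on invertibles; thus $\pi$ induces $\mathbf{S}/\mathbf{G}\iso \mathbf{U}$, an open subvariety of $\mathbb{A}(J)\cong\A^{27}$ and hence rational. At this stage I invoke the structure-group input: for Albert algebras arising from the first Tits construction the group $\mathbf{S}=\operatorname{Str}(J)$ is $R$-trivial. The decisive construction is then a rational section $s\colon \mathbf{U}\dashrightarrow\mathbf{S}$, $x\mapsto g_x$, with $g_x(1)=x$ and $g_1=\id$; concretely, for invertible $x$ one produces a norm similarity carrying $1$ to $x$ and depending rationally on $x$, read off from the explicit isotopy isomorphisms $J\iso J^{(x)}$ available in the first Tits construction. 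Equivalently, this is the rational triviality of the generic fibre of $\pi$, which is a $\mathbf{G}$-torsor over $K(\mathbf{U})$.

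Granting the section, I would define the rational retraction $r\colon\mathbf{S}\dashrightarrow\mathbf{G}$ by $r(h)=s\bigl(\pi(h)\bigr)^{-1}\,h$. A direct check gives $r(h)(1)=s(\pi h)^{-1}\bigl(\pi(h)\bigr)=1$, so $r$ does land in $\mathbf{G}$, and $r|_{\mathbf{G}}=\id$ because $s$ is normalised by $g_1=\id$; moreover $r$ is defined in a neighbourhood of $\id\in\mathbf{S}$. Now take any $\phi\in\mathbf{G}(L)$. By $R$-triviality of $\mathbf{S}$ there is a rational curve $\gamma\colon\A^1\dashrightarrow\mathbf{S}$, defined at $0,1$, with $\gamma(0)=\id$ and $\gamma(1)=\phi$. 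Since $\gamma$ meets the domain of $r$ at $t=0$, the composite $r\circ\gamma\colon\A^1\dashrightarrow\mathbf{G}$ is a genuine rational map; it is defined at $0$ and $1$ because there $\pi(\id)=\pi(\phi)$ is the base point, where $s$ is defined, and it satisfies $(r\circ\gamma)(0)=\id$, $(r\circ\gamma)(1)=\phi$. Hence $\phi$ is $R$-equivalent to $\id$ inside $\mathbf{G}$, and as $L$ was arbitrary this proves that $\mathbf{G}$ is $R$-trivial.

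The main obstacle is the construction of the rational section in the second step, equivalently the rational triviality of the generic $\mathbf{G}$-torsor $\pi\colon\mathbf{S}\to\mathbf{U}$. This is precisely where the first Tits construction hypothesis should be indispensable: the explicit isotopy parametrisation of similarities $1\mapsto x$ is what makes the section rational, and one expects it to be unavailable for the second construction. A secondary technical point, to be handled by the cohomological bookkeeping alluded to in the introduction, is to control connectedness and the twisted forms entering the isotopy isomorphisms, so that the section is correctly normalised at the base point and the transported curves remain inside $\mathbf{G}$ rather than in a larger coset.
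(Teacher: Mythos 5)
Your overall architecture coincides with the paper's: realise ${\bf H}={\bf Aut}(A)$ as the stabiliser of $1$ in ${\bf Str}(A)$, identify ${\bf Str}(A)/{\bf H}$ with the open subvariety $U$ of invertible elements via $\pi(g)=g(1)$, invoke Thakur's theorems for the $R$-triviality of ${\bf Str}(A)$, and transport $R$-triviality down to ${\bf H}$ by a rational section $s$ of $\pi$ normalised at the base point. This is exactly the paper's Corollary asserting that ${\bf H}\times U$ is birationally isomorphic to ${\bf Str}(A)$, so that ${\bf H}$ is $R$-trivial if and only if ${\bf Str}(A)$ is. Your transport step is correct as stated: the retraction $r(h)=s(\pi(h))^{-1}h$ lands in ${\bf H}$, restricts to the identity on ${\bf H}$, and $r\circ\gamma$ is a genuine rational map because $\gamma(0)={\rm id}$ lies in the open domain of $r$; the normalisation $s(1)={\rm id}$ (right-translate by $s(1)\in{\bf H}(K)$) and the reduction to a single path joining ${\rm id}$ to $\phi$ in ${\bf Str}(A)(L)$ (elementary $R$-equivalence is transitive in a group) are routine.

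The genuine gap is the step you yourself call ``the main obstacle'': the existence of the rational section, equivalently the triviality of the generic ${\bf H}$-torsor $\pi^{-1}(\eta)$ over $K(U)$, equivalently the vanishing of ${\rm Ker}\,[H^1(F,{\bf H})\to H^1(F,{\bf Str}(A))]$. This is the entire mathematical content of the theorem and the only place where the first Tits hypothesis can enter; your proposal replaces it with the phrase ``read off from the explicit isotopy isomorphisms $J\simeq J^{(x)}$'', which is an assertion, not an argument. The paper proves precisely this statement (over every extension $F/K$, Proposition \ref{transitivity} and Corollary \ref{reduction}) by a nontrivial cohomological argument: to $a\in U$ it attaches the cubic subfield $L=K(a)$, the subgroup ${\bf G}^L$ of type ${\rm D}_4$ with centre ${\bf Z}^L$, and a class $[\xi_a]\in H^1(K,{\bf Z}^L)$ whose image in $H^1(K,{\bf H})$ is the orbit class of $a$; over $L$ the algebra splits (the mod-$2$ invariants vanish by the first Tits hypothesis and $g_3$ dies over the cubic subfield), so ${\bf G}^L_L$ is quasi-split by Proposition \ref{splitD_4}, so $[\xi_a]_L$ is $R$-trivial; Gille's norm principle (Theorem \ref{normprinciple}) then places ${\rm cor}^L_K({\rm res}_L[\xi_a])=[\xi_a]^3=[\xi_a]$ (the group $H^1(K,{\bf Z}^L)$ has exponent $2$) in ${\rm Ker}\,[H^1(K,{\bf Z}^L)\to H^1(K,{\bf G}^L)]$, which kills the class in $H^1(K,{\bf H})$. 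Your envisioned Jordan-theoretic shortcut could in principle be substituted — the known fact that every isotope of a first Tits construction Albert algebra is isomorphic to it (cf.\ \cite{P}), applied over $K(U)$ to the generic element $p$ and composed with the birational involution $x\mapsto x^{-1}$ of $U$ (an isomorphism $A\to A^{(p)}$ sends $1$ to $p^{-1}$), does produce the section — but no such input appears in your proposal, so as written the theorem is not established.
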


We conclude this introduction by mentioning three standard conjectures about the functor ${\bf G}/R$: (i) it takes values in the category
of abelian groups, (ii) ${\bf G}(K)/R$ is a finite group if $K$ is finitely generated over its prime subfield,
and (iii) the functor ${\bf G}/R$ has transfers. 

If ${\bf G}$ is an algebraic torus, then the first and third conjectures
trivially hold and the second was proved by J.-L.\  Colliot-Th\'el\`ene and J.-J.\ Sansuc \cite{CTS}.
For semisimple groups not much is known. Our main result provides an indication that all three conjectures 
are true for reductive algebraic groups.

\medskip

\noindent
{\it Acknowledgements}: 
We wish to thank Holger Petersson for his answers to our questions.

\section{Preliminaries on algebraic groups}

In this section we collect some facts about algebraic groups for later use. We start with an observation on subgroups of type ${\rm D}_4$ inside the split group of type ${\rm F}_4$.

\begin{proposition}\label{splitD_4} Let ${\bf H}$ be a split group of type ${\rm F}_4$ defined over an arbitrary field $K$.
Then any $K$-subgroup ${\bf M}\subset {\bf H}$  of type type ${\rm D}_4$ is quasi-split.
\end{proposition}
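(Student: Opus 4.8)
The plan is to realise every such $\mathbf{M}$ as a twisted form of a fixed split subgroup and to read off quasi-splitness from the resulting cohomology class. First I would fix a split maximal torus $\mathbf{T}_0\subset\mathbf{H}$ and let $\mathbf{M}_0\subset\mathbf{H}$ be the subsystem subgroup generated by the long root subgroups. Since the long roots of ${\rm F}_4$ form the unique closed subsystem of type ${\rm D}_4$ up to the Weyl group, $\mathbf{M}_0$ is a split $\gSpin_8$; moreover any $K$-subgroup $\mathbf{M}$ of type ${\rm D}_4$ automatically has full rank $4$, so by Borel--de Siebenthal it is $\mathbf{H}(\bar K)$-conjugate to $\mathbf{M}_0$. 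Writing $N=N_{\mathbf{H}}(\mathbf{M}_0)$ one has $N=\mathbf{M}_0\rtimes S_3$ with $S_3$ acting by triality, and the $K$-subgroups $\bar K$-conjugate to $\mathbf{M}_0$ are parametrised, up to $\mathbf{H}(K)$-conjugacy, by $\ker\!\bigl(H^1(K,N)\to H^1(K,\mathbf{H})\bigr)$. If $\xi$ represents $\mathbf{M}$, then $\mathbf{M}\cong{}_{\xi}\mathbf{M}_0$ and its isomorphism class is the image $\bar\xi$ of $\xi$ under $H^1(K,N)\to H^1(K,\operatorname{Aut}(\mathbf{M}_0))$, where $\operatorname{Aut}(\mathbf{M}_0)=\operatorname{\mathbf{PGO}}_8^+\rtimes S_3$.

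Next I would separate the outer and inner parts of $\bar\xi$. Its image $\theta\in H^1(K,S_3)$ records the $\ast$-action and singles out a quasi-split group $\mathbf{M}^{\mathrm{qs}}$ of type ${\rm D}_4$ with that $\ast$-action (one of ${}^1{\rm D}_4,{}^2{\rm D}_4,{}^3{\rm D}_4,{}^6{\rm D}_4$), obtained by twisting $\mathbf{M}_0$ through the pinning-preserving section $S_3\hookrightarrow\operatorname{Aut}(\mathbf{M}_0)$. Since $\mathbf{M}$ and $\mathbf{M}^{\mathrm{qs}}$ have the same outer class, $\mathbf{M}$ is an inner form of $\mathbf{M}^{\mathrm{qs}}$, classified by some $c\in H^1\bigl(K,(\mathbf{M}^{\mathrm{qs}})^{\mathrm{ad}}\bigr)$, and $\mathbf{M}$ is quasi-split exactly when $c$ is trivial, the quasi-split inner form being the distinguished one. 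Thus the statement reduces to showing that this inner class vanishes, i.e. that the $\operatorname{\mathbf{PGO}}_8^+$-component of $\bar\xi$ dies once one remembers that $\mathbf{M}$ sits inside the split group $\mathbf{H}$.

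To exploit the splitness of $\mathbf{H}$ I would pass to a maximal $K$-torus $\mathbf{T}\subset\mathbf{M}$, which is maximal in $\mathbf{H}$ as well. Because $\mathbf{H}$ is split, with trivial $\ast$-action and $\operatorname{Out}({\rm F}_4)=1$, the Galois action on $\Phi(\mathbf{H},\mathbf{T})$ factors through the Weyl group $W({\rm F}_4)$; restricting to the long roots gives the action on $\Phi(\mathbf{M},\mathbf{T})\cong\Phi({\rm D}_4)$ via the identification $W({\rm F}_4)=W({\rm D}_4)\rtimes S_3\cong\operatorname{Aut}(\Phi({\rm D}_4))$. Concretely, choosing $g\in\mathbf{H}(\bar K)$ with $g\mathbf{T}_0g^{-1}=\mathbf{T}$ produces a cocycle $u_\sigma=g^{-1}\,{}^{\sigma}g$ valued in $N_{\mathbf{H}}(\mathbf{T}_0)\subset N$ (the inclusion holds since $W({\rm F}_4)$ preserves the long roots, hence normalises $\mathbf{M}_0$), with $[u]=0$ in $H^1(K,\mathbf{H})$ by construction and $\mathbf{M}\cong{}_{u}\mathbf{M}_0$. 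In this presentation the inner part of $\bar\xi$ is precisely the $W({\rm D}_4)$-component of the class of $\bar u$, so quasi-splitness amounts to producing a maximal $K$-torus for which this component is trivial, equivalently for which the image of Galois lands in a triality complement $S_3\subset W({\rm F}_4)$ stabilising a base of ${\rm D}_4$.

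The main obstacle is exactly this last step. A single torus need not exhibit a Galois-stable base, and the freedom to vary $\mathbf{T}$ inside $\mathbf{M}$ is governed by $H^1(K,N_{\mathbf{M}}(\mathbf{T}))$, which is itself controlled by the very group we are trying to understand. The crux is therefore to show that $[u]=0$ in $H^1(K,\mathbf{H})$, that is, that $u$ is a genuine coboundary in the \emph{split, simply connected} group $\mathbf{H}$, forces the inner $W({\rm D}_4)$-part to be a coboundary too, so that $c$ vanishes. I expect this to be the heart of the matter and to rely on the specific geometry of the long-root $\gSpin_8$ inside ${\rm F}_4$ (the long-root rank-one subgroups being split) together with simple connectedness of $\mathbf{H}$, rather than on any purely formal manipulation; it is the point at which the structural input on these Albert algebras, and its analogue for the structure group, enters.
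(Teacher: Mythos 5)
There is a genuine gap, and you have in fact identified it yourself without closing it. Your setup (Borel--de Siebenthal conjugacy of $\mathbf{M}$ to the long-root $\mathbf{Spin}_8$, the normalizer torsor in $\ker[H^1(K,N)\to H^1(K,\mathbf{H})]$, the inner/outer decomposition of the twisting class) is a reasonable and essentially correct cohomological framing, but the argument stops exactly where a proof would have to begin: you never show that triviality of $[u]$ in $H^1(K,\mathbf{H})$ forces the inner part $c$ of the class to vanish. After your reductions, the statement ``$c=0$ whenever $\mathbf{M}$ embeds in the split $\mathbf{H}$'' is not an auxiliary lemma --- it is a cohomological paraphrase of the proposition itself, and you offer no mechanism by which splitness or simple connectedness of $\mathbf{H}$ would kill $c$; you explicitly defer it (``I expect this to be the heart of the matter''). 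Two further imprecisions: quasi-splitness of an inner form corresponds to $c$ being trivial only up to the action of outer automorphisms on $H^1(K,(\mathbf{M}^{\mathrm{qs}})^{\mathrm{ad}})$, not to literal triviality; and identifying ``the $W(\mathrm{D}_4)$-component of $\bar u$'' with $c$ requires an argument, since $u$ takes values in the full torus normalizer rather than in a constant Weyl group, and the passage to $W$-valued cocycles is not innocent.

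For comparison, the paper's proof avoids this circularity by a completely different, elementary route, and (contrary to your closing expectation) uses no Albert-algebra input at all. First, since $\dim\mathbf{B}=28=\dim\mathbf{M}$ while $\dim\mathbf{H}=52$, a Borel $K$-subgroup $\mathbf{B}\subset\mathbf{H}$ meets $\mathbf{M}$ nontrivially, so $\mathbf{M}$ is $K$-isotropic. One then runs through the Tits indices of isotropic, non-quasi-split groups of type $\mathrm{D}_4$ and eliminates each: when the maximal split torus $\mathbf{S}\subset\mathbf{M}$ has dimension $1$ or $2$ with anisotropic kernel of type $\mathrm{A}_3$ or $\mathrm{A}_1\oplus\mathrm{A}_1$, the centralizer $C_{\mathbf{H}}(\mathbf{S})$ is \emph{split} reductive (because $\mathbf{S}$ lies in a maximal split torus of the split group $\mathbf{H}$), its semisimple part is a split group of rank $3$ or $2$ containing $\mathbf{M}_{ss}$, and a dimension count against a Borel subgroup of that split group contradicts anisotropy of $\mathbf{M}_{ss}$; the quaternionic index with Witt index $1$ is reduced to the previous case by passing to the function field of the Severi--Brauer variety of the quaternion algebra, and the trialitarian index by extending scalars to the cubic extension. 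If you want to salvage your approach, you must supply an actual argument at your crux, and the paper shows that the needed input is just this kind of isotropy-plus-Tits-index bookkeeping, not anything specific to Albert algebras or their structure groups.
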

\begin{proof} Consider a Borel $K$-subgroup ${\bf B}$ of $\bf H$. Since the dimension of ${\bf B}$ is 28, it intersects the 28-dimensional subgroup ${\bf M}$ non-trivially.
Thus ${\bf M}$ contains a split unipotent subgroup or 
a split torus ${\bf G}_m$. In both cases, ${\bf M}$ is $K$-isotropic. 

Let ${\bf S}\subset {\bf M}$ be a maximal $K$-split torus. If ${\rm dim}({\bf S})\geq 3$, then ${\bf M}$ is necessarily quasi-split. It remains
to check the cases where the dimension of ${\bf S}$ is 1 or 2. Note that if ${\bf M}$ is a trialitarian group of type $^{3,6}{\rm D}_4$
and if ${\rm dim}({\bf S})=2$ then ${\bf M}$ is quasi-split. The remaining cases correspond to the following Tits $K$-indices
(cf. \cite{T66}).

\subsection{Type $^1{\rm D}_4$}
\begin{equation}
\label{1}
\begin{picture}(250,20)
\put(00,00){\line(1,0){20}}
\put(20,0){\line(2,-1){20}}
\put(20,0){\line(2,1){20}}
\put(0,0){\circle*{3}}
\put(20,0){\circle*{3}}
\put(40,10){\circle*{3}}
\put(40,-10){\circle*{3}}
\put(00,0){\circle{10}}
\put(-5,8){$\alpha_1$}
\put(15,8){$\alpha_2$}
\put(45,10){$\alpha_{3}$}
\put(45,-13){$\alpha_{4}$}
\end{picture}       
\end{equation}
\begin{equation}
\label{2}
\begin{picture}(250,20)
\put(00,00){\line(1,0){20}}
\put(20,0){\line(2,-1){20}}
\put(20,0){\line(2,1){20}}
\put(0,0){\circle*{3}}
\put(20,0){\circle*{3}}
\put(40,10){\circle*{3}}
\put(40,-10){\circle*{3}}
\put(00,0){\circle{10}}
\put(20,0){\circle{10}}
\put(-5,8){$\alpha_1$}
\put(15,8){$\alpha_2$}
\put(45,10){$\alpha_{3}$}
\put(45,-13){$\alpha_{4}$}
\end{picture}       
\end{equation}

\begin{equation}
\label{3}
\begin{picture}(250,20)
\put(00,00){\line(1,0){20}}
\put(20,0){\line(2,-1){20}}
\put(20,0){\line(2,1){20}}
\put(0,0){\circle*{3}}
\put(20,0){\circle*{3}}
\put(40,10){\circle*{3}}
\put(40,-10){\circle*{3}}
\put(20,0){\circle{10}}
\put(-5,8){$\alpha_1$}
\put(15,8){$\alpha_2$}
\put(45,10){$\alpha_{3}$}
\put(45,-13){$\alpha_{4}$}
\end{picture}       
\end{equation}

\medskip

\subsection{Type $^2{\rm D}_4$}

\medskip

\begin{equation}
\label{4}
\begin{picture}(250,20)
\put(00,00){\line(1,0){20}}
\put(40,0){\oval(40,20)[l]}
\put(0,0){\circle*{3}}
\put(20,0){\circle*{3}}
\put(40,10){\circle*{3}}
\put(40,-10){\circle*{3}}
\put(0,0){\circle{10}}
\put(-5,8){$\alpha_1$}
\put(11,12){$\alpha_2$}
\put(45,6){$\alpha_{3}$}
\put(45,-8){$\alpha_{4}$}
\end{picture}       
\end{equation}

\begin{equation}
\label{5}
\begin{picture}(250,20)
\put(00,00){\line(1,0){20}}
\put(40,0){\oval(40,20)[l]}
\put(0,0){\circle*{3}}
\put(20,0){\circle*{3}}
\put(40,10){\circle*{3}}
\put(40,-10){\circle*{3}}
\put(0,0){\circle{10}}
\put(20,0){\circle{10}}
\put(-5,8){$\alpha_1$}
\put(11,12){$\alpha_2$}
\put(45,6){$\alpha_{3}$}
\put(45,-8){$\alpha_{4}$}
\end{picture}       
\end{equation}

\begin{equation}
\label{6}
\begin{picture}(250,20)
\put(00,00){\line(1,0){20}}
\put(40,0){\oval(40,20)[l]}
\put(0,0){\circle*{3}}
\put(20,0){\circle*{3}}
\put(40,10){\circle*{3}}
\put(40,-10){\circle*{3}}
\put(20,0){\circle{10}}
\put(-5,8){$\alpha_1$}
\put(11,12){$\alpha_2$}
\put(45,6){$\alpha_{3}$}
\put(45,-8){$\alpha_{4}$}
\end{picture}       
\end{equation}

\bigskip

\subsection{Type $^{3,6}{\rm D}_4$}

\medskip

\begin{equation}
\label{7}
\begin{picture}(250,20)
\put(00,00){\line(1,0){30}}
\put(30,0){\oval(60,20)[l]}
\put(0,0){\circle*{3}}
\put(30,0){\circle*{3}}
\put(30,10){\circle*{3}}
\put(30,-10){\circle*{3}}
\put(00,0){\circle{10}}
\end{picture}
\end{equation}

\bigskip

We will show that neither case can occur.

\smallskip

\noindent
{\it Case} (\ref{1}): Here ${\rm dim}({\bf S})=1$. The semisimple anisotropic kernel ${\bf M}_{ss}$ of ${\bf M}$ is a group
of type ${^1}A_3$, and is thus of  dimension $15$. From the theory of algebraic groups, the centralizer $C_{{\bf H}}({\bf S})$ in $\bf H$ is a $K$-split\footnote{It is $K$-split because every split subtorus in ${\bf H}$, 
in particular ${\bf S}$, is contained in a maximal split torus of ${\bf H}$.} 
reductive group of rank $4$ and by construction contains ${\bf M}_{ss}$. Hence the semisimple part
$$C_{\bf H}({\bf S})_{ss}=[C_{{\bf H}}({\bf S}),C_{{\bf H}}({\bf S})]$$ 
of $C_{{\bf H}}({\bf S})$ 
is a semisimple $K$-split group of rank $3$ containing ${\bf M}_{ss}$. Since $C_{\bf H}({\bf S})_{ss}$ is split, $C_{\bf H}({\bf S})_{ss}
\not={\bf M}_{ss}$. Therefore the type of $C_{\bf H}({\bf S})_{ss}$ is ${\rm B}_3$ or ${\rm C}_3$, 
and so its dimension is $21$.  
In both cases, any Borel $K$-subgroup of $C_{\bf H}({\bf S})_{ss}$ has dimension 
$12$, and thus for dimension reasons intersects ${\bf M}_{ss}$ nontrivially, contradicting the assumption that ${\bf M}_{ss}$ is $K$-anisotropic.

\smallskip

\noindent
{\it Case} (\ref{2}): Here ${\rm dim}({\bf S})=2$, and the semisimple anisotropic kernel ${\bf M}_{ss}$ of ${\bf M}$ is a group
of type $A_1\oplus A_1$, hence of dimension $6$. Again, we consider the centraliser $C_{{\bf H}}({\bf S})$;
its semisimple part $C_{\bf H}({\bf S})_{ss}=[C_{{\bf H}}({\bf S}),C_{{\bf H}}({\bf S})]$ is a $K$-split semisimple group of rank $2$ containing ${\bf M}_{ss}$. Therefore it is either of
type ${\rm A}_2$ or
${\rm B}_2={\rm C}_2$. A dimension count shows that the intersection of a Borel $K$-subgroup of
$C_{\bf H}({\bf S})_{ss}$ and ${\bf M}_{ss}$ is nontrivial, contradicting anisotropy.

\smallskip

\noindent
{\it Case} (\ref{3}): Here, up to isogeny, ${\bf M}$ is of the form ${\bf O}^+(D,f)$ where 
$D$ is a quaternion algebra and $f$ is a skew-hermitian form over $D$  of dimension $4$ 
with Witt index  $1$.
It is known that after passing to the function field of the Severi--Brauer variety of $D$, 
 the anisotropic part of $f$
remains anisotropic, hence
the Tits index of ${\bf M}$ is of the form (\ref{2}). But this case is impossible by the above consideration.

Cases (4), (5) and (6) are treated in the same way as cases (1), (2) and (3) respectively, since the dimensions of the corresponding subgroups are the same. 
In case (7), the uncircled vertices correspond to the Weil restriction $R_{L/K}({\rm SL}(1,D))$ for a cubic field extension $L$ of $K$ and a quaternion $L$-algebra $D$. Extending scalars
to $L$ reduces this case to one of the previously treated cases (3) and (6). This completes the proof.
\end{proof}

\subsection{The norm principle}

Let ${\bf G}$ be a semisimple group defined over a field $K$ and ${\bf Z}\subset {\bf G}$ a central subgroup. Let moreover $c=[\xi]$ be an element in
\begin{equation}\label{R-trivial}
{\rm Ker}\,[H^1(K,{\bf Z})\longrightarrow H^1(K,{\bf G})].
\end{equation}

\smallskip

\noindent
{\bf Definition.} We say that $[\xi]$ is \emph{$R$-trivial} if there exists
$$
[\xi(t)]\in {\rm Ker}\,[H^1(K(t),{\bf Z})\longrightarrow H^1(K(t),{\bf G})],
$$
where $K(t)$ is a purely transcendental extension of $K$, such that $\xi(t)$ is defined at $t=0$ and $t=1$, and satisfies $[\xi(0)]=1$ and
$[\xi(1)]=[\xi]$.

\begin{remark} Here, if ${\bf J}$ is an algebraic group over $K$, we say that a cocycle in $Z^1(K(t),{\bf J})$) is \emph{defined at 0 and 1} if it lies in the image of
$Z^1(\mathcal O,{\bf J})$, where $\mathcal O$ is the intersection of the localizations $K[t]_{(t)}$ and $K[t]_{(t-1)}$ in $K(t)$.
Using the evaluation maps $\varepsilon_0,\varepsilon_1:\mathcal O\to K$, such a cocycle can be evaluated at 0 and 1.
\end{remark}

\begin{remark}  
The short exact sequence
\[1\longrightarrow {\bf Z}\longrightarrow {\bf G}\longrightarrow {\bf G}/{\bf Z}\longrightarrow 1\]
induces an exact sequence
\[1\longrightarrow {\bf Z}(K)\longrightarrow {\bf G}(K)\longrightarrow ({\bf G}/{\bf Z})(K)\overset{f}{\longrightarrow} H^1(K,{\bf Z})\longrightarrow H^1(K,{\bf G})\]
and the map $f$ induces a bijection between the
orbits of ${\bf G}(K)$ in $({\bf G}/{\bf Z})(K)$ and the kernel in (\ref{R-trivial}).
\end{remark}

\smallskip

\begin{example}\label{quasi-split}
 Let ${\bf G}$ be a quasi-split (absolutely) simple simply connected group. Then 
$$
H^1(K,{\bf Z})= {\rm Ker}\,[H^1(K,{\bf Z})\to H^1(K,{\bf G})] 
$$
and every $[\xi]\in H^1(K,{\bf Z})$ 
is $R$-trivial.
Indeed, ${\bf Z}$ is contained in a maximal $K$-quasi-split torus $T$ and $H^1(F,T)=1$ for any field extension $F/K$.
It remains to note that the variety ${\bf G}/{\bf Z}$ is $K$-rational, which implies that
all elements in $H^1(K,{\bf Z})$ are $R$-trivial.
\end{example}

\begin{example}\label{typeA_n} 
Let ${\bf G}={\bf SL}(1,D)$ where $D$ is a central simple algebra of arbitrary 
degree $n$ over $K$, and let ${\bf Z}$ be the centre of ${\bf G}$. Since
${\bf Z}\simeq \gmu_n$, we have $H^1(K,{\bf Z})\simeq K^\times/K^{\times n}$. Also, $H^1(K,{\bf G})\simeq K^\times /{\rm Nrd}\,(D^\times)$.
Therefore,
$$
{\rm Ker}\,[H^1(K,{\bf Z})\longrightarrow H^1(K,{\bf G})]={\rm Nrd}(D^\times)/K^{\times n}.
$$
Since $D$ is an affine space, any $[\xi]$ in the above kernel is $R$-trivial. 
\end{example}
\smallskip

For any finite  extension $L/K$ we have the restriction map
$$
res^L_K: H^1(K,{\bf Z}) \to H^1(L,{\bf Z})
$$
and the corestriction map 
$$
cor^L_K: H^1(L,{\bf Z})\to 
H^1(K,{\bf Z}).
$$ 
\noindent
{\bf Definition.} Let $L/K$ be a finite field extension. 
We say that \emph{the norm principle holds for a cohomology class}
$$
[\eta]\in {\rm Ker}\,[H^1(L, {\bf Z})\longrightarrow H^1(L,{\bf G})]
$$
if
$$
cor^L_K([\eta]) \in {\rm Ker}\,[H^1(K,{\bf Z}) \longrightarrow H^1(K,{\bf G})].  
$$
We also say that the norm principle holds for the pair $({\bf Z},{\bf G})$ if it holds for all
classes
\[ [\eta]\in {\rm Ker}\,[H^1(L, {\bf Z})\longrightarrow H^1(L,{\bf G})]\]
for each finite field extension $L/K$.

\begin{theorem}\label{normprinciple}{\rm (P.\ Gille \cite{Gille})} Let $L/K$ be a finite field extension and assume that
$[\eta]\in {\rm Ker}\,[H^1(L, {\bf Z})\longrightarrow H^1(L,{\bf G})]$
is $R$-trivial. Then the norm principle holds for $[\eta]$.
\end{theorem}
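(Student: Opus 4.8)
The plan is to reduce the assertion to a triviality statement for a single ${\bf G}$-torsor over the semilocal ring $\mathcal{O}$, and then to deduce that triviality from the vanishing at $t=0$ that $R$-triviality provides. Since $[\eta]$ is $R$-trivial there is a class $[\eta(t)]$ in ${\rm Ker}\,[H^1(L(t),{\bf Z})\to H^1(L(t),{\bf G})]$ that is defined at $0$ and $1$; by the remark above it is represented by a cocycle in $Z^1(\mathcal{O}_L,{\bf Z})$, where $\mathcal{O}_L=\mathcal{O}\otimes_K L$, giving a class $u\in H^1(\mathcal{O}_L,{\bf Z})$ with $\varepsilon_0^*u=[\eta(0)]=1$ and $\varepsilon_1^*u=[\eta]$. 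First I would observe that $u$ already lies in the kernel over $\mathcal{O}_L$: its image in $H^1(L(t),{\bf G})$ is trivial because $[\eta(t)]$ is, and the restriction $H^1(\mathcal{O}_L,{\bf G})\to H^1(L(t),{\bf G})$ is injective for the reductive group ${\bf G}$ over the regular semilocal domain $\mathcal{O}_L$ of dimension $1$ (injectivity of Grothendieck--Serre/Nisnevich type). Hence $u\in{\rm Ker}\,[H^1(\mathcal{O}_L,{\bf Z})\to H^1(\mathcal{O}_L,{\bf G})]$.

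Next I would push $u$ down by corestriction. Taking $L/K$ separable, $\mathcal{O}_L/\mathcal{O}$ is finite \'etale, so there is a transfer $cor\colon H^1(\mathcal{O}_L,{\bf Z})\to H^1(\mathcal{O},{\bf Z})$ for the finite commutative group scheme ${\bf Z}$, compatible with base change along the Cartesian squares produced by the evaluation maps (note $\mathcal{O}_L\otimes_{\mathcal{O}}K=L$). Setting $v=cor(u)$ and using that corestriction commutes with the pullbacks $\varepsilon_i^*$, I obtain $\varepsilon_0^*v=cor^L_K(\varepsilon_0^*u)=cor^L_K(1)=1$ and $\varepsilon_1^*v=cor^L_K([\eta])$. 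The whole theorem then follows from the single claim that $v$ lies in ${\rm Ker}\,[H^1(\mathcal{O},{\bf Z})\to H^1(\mathcal{O},{\bf G})]$: restricting this kernel along $\varepsilon_1$ places $cor^L_K([\eta])$ in ${\rm Ker}\,[H^1(K,{\bf Z})\to H^1(K,{\bf G})]$, which is exactly the norm principle for $[\eta]$.

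To establish the claim, let $\beta\in H^1(\mathcal{O},{\bf G})$ be the image of $v$. Since $\varepsilon_0^*\beta$ is the image of $\varepsilon_0^*v=1$, the torsor $\beta$ is trivial at $t=0$. I would then extend $\beta$ to a ${\bf G}$-torsor on $\A^1_K$ and invoke homotopy invariance for torsors under the reductive group ${\bf G}$ over the affine line (Raghunathan--Ramanathan): every such torsor is pulled back from $K$, hence is trivial as soon as it is trivial at one $K$-rational point. Triviality at $t=0$ would then force $\beta=1$, in particular $\varepsilon_1^*\beta=1$, proving the claim. The main obstacle is precisely this last step: corestriction is \emph{not} defined on the non-abelian pointed set $H^1(-,{\bf G})$, so one cannot simply transport the triviality of the image of $u$ over $\mathcal{O}_L$ down to $\mathcal{O}$. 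Making the extension to $\A^1_K$ (or replacing it by a suitable rigidity/specialization statement over $\mathcal{O}$) precise is the delicate point, and it is exactly here that $R$-triviality is indispensable: the path supplies a base point $t=0$ at which the corestricted class is visibly trivial, after which the geometry of reductive-group torsors over the line over $K$ propagates that triviality to $t=1$.
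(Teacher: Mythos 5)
First, a point of reference: the paper contains no proof of Theorem \ref{normprinciple} at all — it is imported as a black box from Gille's paper \cite{Gille}. So your attempt must be judged as a standalone proof of Gille's theorem. Its preparatory steps are sound: spreading $[\eta(t)]$ out to a class $u\in H^1(\mathcal{O}_L,{\bf Z})$, deducing that $u$ lies in $\Ker[H^1(\mathcal{O}_L,{\bf Z})\to H^1(\mathcal{O}_L,{\bf G})]$ from the trivial kernel of $H^1(\mathcal{O}_L,{\bf G})\to H^1(L(t),{\bf G})$ (Grothendieck--Serre/Nisnevich over a semilocal Dedekind ring), and producing $v=cor(u)\in H^1(\mathcal{O},{\bf Z})$ with $\varepsilon_0^*v=1$ and $\varepsilon_1^*v=cor^L_K([\eta])$ (you quietly assume $L/K$ separable, which is harmless for the paper's application, where $\operatorname{char}K\neq 3$ makes cubic subfields separable, but is a restriction on the theorem as stated). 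The problem is your final claim, and it is not, as you suggest, a matter of ``making the extension to $\A^1_K$ precise'': the principle you invoke there is false. A ${\bf G}$-torsor over $\mathcal{O}$ lying in the image of $H^1(\mathcal{O},{\bf Z})$ and trivial at $t=0$ need not be trivial at $t=1$. Take $K=\Q$, ${\bf G}={\bf SL}(1,D)$ with $D=(-1,-1)$ the Hamilton quaternions, ${\bf Z}=\gmu_2$ its centre, and let $v\in H^1(\mathcal{O},\gmu_2)=\mathcal{O}^\times/\mathcal{O}^{\times 2}$ be the class of the unit $1-2t$. Its image $\beta\in H^1(\mathcal{O},{\bf G})$ satisfies $\varepsilon_0^*\beta=1$, since $\varepsilon_0^*v=[1]$; but $\varepsilon_1^*v=[-1]$, and $-1$ is not a reduced norm from $D$ (not a sum of four squares), so $\varepsilon_1^*\beta$ is nontrivial in $H^1(\Q,{\bf G})=\Q^\times/\Nrd(D^\times)$. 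The mechanism is exactly what your argument ignores: this torsor ramifies at $t=1/2$, hence does not extend to $\A^1_{\Q}$, and Raghunathan--Ramanathan homotopy invariance never gets off the ground. Nothing in your argument excludes this behaviour for your $\beta$, because after corestricting, the only properties of $v$ you retain — an abelian class over $\mathcal{O}$ whose image in $H^1(\mathcal{O},{\bf G})$ is trivial at $0$ — are precisely the properties of this counterexample.

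The conclusion is that any correct proof must exploit the specific provenance of $v$ as the corestriction of a class $u$ that dies in $H^1(\mathcal{O}_L,{\bf G})$. You do establish that fact about $u$, but you never use it afterwards — indeed you cannot, by your own (correct) observation that corestriction does not exist for the non-abelian set $H^1(-,{\bf G})$. Recovering the consequences of that discarded information is the actual content of Gille's theorem; in \cite{Gille} this is done by a genuinely different and substantially longer analysis (working with the exact sequence $1\to{\bf Z}\to{\bf G}\to{\bf G}/{\bf Z}\to 1$, points of the quotient over $\mathcal{O}$ and $\mathcal{O}_L$, and norm-principle techniques in the style of Merkurjev), not by a formal specialization-plus-homotopy-invariance argument. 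In short: your steps over $\mathcal{O}_L$ and the transfer to $\mathcal{O}$ are correct, but the theorem lives entirely in your final claim, and that claim, as supported by your argument, rests on a false statement.
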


\section{Preliminaries on Albert algebras}\label{prel}

\subsection{Albert Algebras}
Let $K$ be a field of characteristic different from 2 and 3. A \emph{Jordan algebra} over $K$ is a unital, commutative $K$-algebra\footnote{We do not assume the algebra to be associative.
} $A$ in which the Jordan identity
\[(xy)(xx)=x(y(xx))\]
holds for all $x,y\in A$ (thus $A$ is power associative). 

If $B$ is an associative algebra with multiplication denoted by $\cdot$, then the anticommutator $\frac{1}{2}(x\cdot y+y\cdot x)$ 
 endows $B$ with a Jordan algebra structure,
which we denote by $B^+$. A Jordan algebra $A$ is called \emph{special} if it is isomorphic to a Jordan 
subalgebra of $B^+$ for some associative algebra $B$, and \emph{exceptional} otherwise.
An \emph{Albert algebra} is then defined as a simple, exceptional Jordan algebra. 
It is known that the dimension of any Albert algebra is 27, and that over separably closed fields,
all Albert algebras are isomorphic \cite[37.11]{KMRT}. From this it follows that all Albert algebras over $K$ are twisted forms 
of each other.

Over a general  field $K$, any Albert algebra arises from one of the two \emph{Tits constructions}. In this note, we will mainly be concerned with those arising from the first Tits
construction. Recall that in this case, $A=D\oplus D\oplus D$ as a vector space, where $D$ is a central simple algebra over
$K$ of degree $3$. The multiplication of $A$ is determined by a scalar $\mu\in K^*$. More precisely, define the cross product on $D$ by
\[u\times v=uv+vu-T_D(u)v-T_D(v)u-T_D(uv)+T_D(u)T_D(v),\]
where juxtaposition denotes the associative multiplication of $D$, and $T_D$ denotes the reduced trace, and write
\[\widetilde u=T_D(u)-u\]
for $u\in D$. For any  $\mu\in K^\times$, the product $(x,y,z)(x',y',z')$ in $A=D\oplus D\oplus D$ is then given by
\[\frac{1}{2}(xx'+x'x+\widetilde{yz'}+\widetilde{y'z},\widetilde{x}y'+\widetilde{x'}y+\mu^{-1}z\times z',z\widetilde{x'}+z'\widetilde{x}+\mu y\times y').\] 
One then writes $A=J(D,\mu)$ and says that $A$ arises from $D$ and $\mu$ via the first Tits construction.

In some arguments, we will also need to consider reduced Albert algebras, and we briefly recall their construction. Given an octonion algebra $C$ over $K$ and 
$\Gamma=(\gamma_1,\gamma_2,\gamma_3)\in (K^\times)^3$, the set $H_3(C,\Gamma)$ of $\Gamma$-hermitian $3\times 3$-matrices over $C$, i.e.\ those of the form
\[\left(\begin{array}{ccc}
   \xi_1 & c_3 & \gamma_1^{-1}\gamma_3\overline{c_2}\\
   \gamma_2^{-1}\gamma_1\overline{c_3} & \xi_2 & c_1\\
   c_2 & \gamma_3^{-1}\gamma_2\overline{c_1} & \xi_3
  \end{array}\right),
\]
with $\xi_i\in K$ and $c_i\in C$ for $i=1,2,3$, and where $\overline{\phantom{c}}$ denotes the involution on $C$, is an Albert algebra under the anticommutator of the matrix product.
From \cite{Sch} we know that an Albert algebra over $K$ is a division algebra if and only if it is not of this form.

\subsection{Automorphisms and Similarities}
If $A$ is an Albert algebra over $K$, then ${\bf H}={\bf Aut}(A)$ is a simple algebraic group  over $K$ 
of type ${\rm F}_4$. In fact, the assignment $A\mapsto {\bf Aut}(A)$ establishes an equivalence between
the category of Albert algebras over $K$ and the category of simple { $K$-groups} of type ${\rm F}_4$. Moreover, $A$ is endowed with a cubic form $N:A\to K$ known as the \emph{norm} of $A$. 
The \emph{structure group} ${\bf Str}(A)$ of $A$ is the affine group scheme  whose $R$-points are 
defined by
$$
{\bf Str}(A)(R)=\{ x\in {\bf GL}(A)(R)\ | \ x\ \text{is an isotopy of}\ A\otimes R ,\}
$$  
for every $K$-ring $R$. By an \emph{isotopy} of an Albert algebra $B$ over a ring $R$ 
one understands 
an isomorphism of Jordan algebras from $B$ to the isotope $B^{(p)}$ for some 
$p\in B^\times$   with $N(p)\in R^\times$, the
algebra $B^{(p)}$ having the same underlying module as $B$, with multiplication 
\[x\cdot_p y=x(yp)+y(xp)-(xy)p.\]

It is clear that every isotopy is a norm similarity  of $A$. This holds for Albert algebras 
over commutative rings (see \cite{P} for a discussion of such algebras) and implies that the
structure group is a subgroup of the group of norm similarities. 
By \cite[VI Theorem 7]{Jac},
if $F$ is a field with more than 3 elements, then every norm similarity is an isotopy. Since these groups are smooth, it follows that they coincide, i.e.\ for any $K$-ring $R$,
\[{\bf Str}(A)(R)=\{ x\in {\bf GL}(A)(R)\ | \ N_R(x(a))=\nu(x) N_R(a)\ \ \forall \,a\in A\otimes R\ \},\]
where $N_R$ is the scalar extension of $N$ to $A\otimes R$, and $\nu(x)$ is a 
scalar  in $R^\times$ 
(called a multiplier) depending on $x$ only. Note that ${\bf Str}(A)$ contains a (central) split 
torus ${\bf G}_m$ consisting of all homotheties of $A$. 

It is well known that the derived subgroup $${\bf G}=[{\bf Str}(A),{\bf Str}(A)]$$ of ${\bf Str}(A)$ 
is a strongly inner form of a split simple simply connected algebraic group
of type ${\rm E}_6$ and that ${\bf Str}(A)$ is an almost direct product of ${\bf G}_m$ and ${\bf G}$ (their intersection is the centre 
of $\bf{G}$). Thus, $$\overline{{\bf G}}={\bf Str}(A)/{\bf G}_m$$ is an adjoint group of type ${\rm E}_6$.
Since our main result is known for split or isotropic ${\bf G}$ we may assume without loss of generality 
that ${\bf G}$ is $K$-anisotropic which is equivalent  to saying that $A$ is a division algebra or equivalently 
the norm map $N$ is anisotropic, i.e.\
the equation $N(a)=0$ has no non-zero solutions over $K$.

\subsection{Action of the structure group ${\bf Str}(A)$ on $A$}
We denote the group of $K$-points of ${\bf Str}(A)$ (resp. ${\bf G},\, {\overline{\bf G}},\, {\bf H}$) by
${\rm Str}(A)$ (resp. $G,\,\overline{G},\,H$). Under the natural action of ${\bf Str}(A)$ on $A$, the group ${\bf H}$ coincides with the stabilizer of  $1\in A$, as follows from
\cite[5.9.4]{SV}.
This action of ${\bf Str}(A)$ on $A$ induces an action 
on the open subset $U$ defined by
$$
U(R)=\{ a\in A\otimes_KR\ |\ N_R(a)\in R^\times\}.
$$
for any $K$-ring $R$.
\begin{lemma} 
Let $A$ be split over $K$. Then 
 the action ${\rm Str}(A)\times U \to U$
is transitive.
\end{lemma}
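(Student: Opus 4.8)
The plan is to prove that the orbit of the identity element $1\in A$ under ${\bf Str}(A)$ already exhausts $U$, by combining a dimension count with the observation that \emph{every} point stabilizer has the same dimension. First I would record the relevant dimensions. Since ${\bf Str}(A)$ is an almost direct product of ${\bf G}_m$ and the simply connected group ${\bf G}$ of type ${\rm E}_6$, it has dimension $1+78=79$, while the stabilizer of $1$ is ${\bf H}={\bf Aut}(A)$ of type ${\rm F}_4$, of dimension $52$. Hence the orbit of $1$, isomorphic to ${\bf Str}(A)/{\bf H}$, has dimension $79-52=27$. On the other hand $U$ is the non-vanishing locus of the cubic norm $N$ in the affine space $A\cong\mathbb{A}^{27}$, so $U$ is irreducible of dimension $27$. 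Being a locally closed subvariety of full dimension in the irreducible variety $U$, the orbit of $1$ is therefore dense, and hence open.

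It then remains to rule out any further orbit in the closed complement of ${\bf Str}(A)\cdot 1$. The decisive point is that every orbit in $U$ is $27$-dimensional, i.e.\ that the stabilizer of an arbitrary invertible $a\in U(\bar K)$ has dimension $52$. To establish this I would invoke the theory of isotopes: writing $a=p^{-1}$ for the Jordan inverse of an invertible $p$, the structure group is an isotopy invariant, so ${\bf Str}(A)={\bf Str}(A^{(p)})$ as subgroups of ${\bf GL}(A)$, and the identity element of the isotope $A^{(p)}$ is exactly $p^{-1}=a$. Consequently the stabilizer of $a$ in ${\bf Str}(A)$ is the stabilizer of the identity of $A^{(p)}$, which is ${\bf Aut}(A^{(p)})$. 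Since any isotope of an Albert algebra is again an Albert algebra, this is a group of type ${\rm F}_4$, of dimension $52$. Thus every orbit in $U$ is open, and as two distinct orbits would be disjoint nonempty open subsets of the irreducible variety $U$, there is a single orbit; that is, the action is transitive.

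The hard part will be precisely this last step, controlling the stabilizers of \emph{all} invertible elements rather than just that of $1$, since the naive dimension count alone yields only an open dense orbit. An alternative formulation that isolates the same difficulty is to first check that the multiplier character $\nu\colon{\bf Str}(A)\to{\bf G}_m$ is surjective (already the homotheties realise $t\mapsto t^3$, which is surjective since $\ch K\neq 3$), use $\nu$ to move any $a$ into the norm-one hypersurface $N_1=\{N=1\}$, and then prove that ${\bf G}$ acts transitively on $N_1$; but this in turn relies on showing that all ${\bf G}$-stabilizers of points of $N_1$ are of type ${\rm F}_4$. In either formulation, transitivity on the generic (invertible) locus of the split Albert algebra is classical and can be extracted from the structure theory in \cite{SV}, or from the description of $({\bf Str}(A),A)$ as a prehomogeneous vector space with open orbit $U$; I would fall back on such a citation should a fully self-contained stabilizer computation prove too long.
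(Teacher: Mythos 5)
Your argument proves the wrong statement: it shows that $U$ is a single \emph{geometric} orbit, i.e.\ that ${\bf Str}(A)(\bar K)$ acts transitively on $U(\bar K)$. That is exactly what the paper treats as the known starting point (``the action is transitive over a separable closure of $K$''). The lemma, however, asserts transitivity of ${\rm Str}(A)={\bf Str}(A)(K)$ on the \emph{$K$-points} of $U$, for $A$ split over an arbitrary field $K$ of characteristic $\neq 2,3$. Your decisive step --- ``two distinct orbits would be disjoint nonempty open subsets of the irreducible variety $U$'' --- is valid only for orbits over a separably closed field; over $K$ itself a variety that is one geometric orbit can break into many orbits of the group of rational points. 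The simplest example: ${\bf G}_m$ acting on ${\bf G}_m$ by $t\cdot x=t^2x$ is geometrically transitive, yet the orbits on $K$-points are the classes of $K^\times/K^{\times 2}$. In the present situation the orbits of ${\rm Str}(A)$ on $U(K)$ are in bijection with ${\rm Ker}\,[H^1(K,{\bf H})\to H^1(K,{\bf Str}(A))]$, and the entire content of the lemma is that this kernel vanishes when $A$ is split. This is not a formality: by \cite[Theorem 2.5]{Als} the kernel parametrizes isomorphism classes of isotopes of $A$, and for a general Albert algebra it is nontrivial. Your fallback, the prehomogeneous-vector-space description with open orbit $U$, has the same defect: an open orbit is a geometric notion and says nothing about rational points.

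The paper's proof is devoted precisely to this cohomological vanishing, argued in two ways: (i) by twisting, a dimension count forcing the twisted ${\rm F}_4$-group to be isotropic (hence with trivial $f_5$ and $g_3$), and the Tits classification of isotropic groups of type $^1{\rm E}_6$; or (ii) by Jordan theory, since isotopes of the split Albert algebra have trivial invariants mod $2$ and $3$ \cite[Corollary 60]{P}, hence are all isomorphic. Your observation that the stabilizer of $a$ is ${\bf Aut}(A^{(p)})$ with $p=a^{-1}$ is correct and is in fact the natural entry point to route (ii): what you need to add is not a dimension count but the theorem that every isotope $A^{(p)}$ of the split $A$ is isomorphic to $A$ \emph{over $K$}; such an isomorphism $A\to A^{(p)}$ is an isotopy defined over $K$, i.e.\ an element of ${\rm Str}(A)$ carrying $1$ to an arbitrary prescribed point of $U(K)$. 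Without that input (or the equivalent cohomological computation), the proof establishes only what was already assumed.
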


It is known that the action is transitive over a separable closure of $K$. Therefore, to prove the lemma,
it suffices to show that the natural map $H^1(K,{\bf H})\to H^1(K,{\bf Str}(A))$ has trivial kernel
or equivalently 
$$
{\rm Ker}\,[H^1(K,{\bf H})\longrightarrow H^1(K,{\bf G})]=1.
$$

We will give two proofs of this statement, one using the structure theory of algebraic groups, and the other more Jordan theoretic in nature, to illuminate both sides of the statement.

\begin{proof}[First proof] Let $[\xi]$ be in the kernel. Consider the twisted group ${^\xi{\bf H}}$. 
It is a $K$-subgroup
of the split group ${^\xi{\bf G}}\simeq {\bf G}$. Let ${\bf B}\subset {\bf G}$ be a Borel $K$-subgroup.
A dimension count shows that ${^\xi{\bf H}}\cap {\bf B}$ is nontrivial. Hence ${^\xi{\bf H}}$ is isotropic, which
implies that the invariants $g_3$ and $f_5$ of ${^\xi{\bf H}}$ are both trivial. Thus, up to equivalence in 
$H^1(K,{\bf H})$,
we may assume that $\xi$ takes values in a split subgroup of ${\bf H}$ of type ${\rm G}_2$ which is in turn a subgroup of 
the split group ${\bf M}$ of type $\mathrm{D}_4$ generated by the roots $\alpha_2,\ldots,\alpha_5$.
Since $[\xi]$ is trivial in $H^1(K,{\bf G})$, it follows  from the Tits classification of isotropic groups of type $^1\mathrm{E}_6$
that it is trivial in $H^1(K,{\bf M})$, hence 
in $H^1(K,{\bf H})$. 
\end{proof}

\begin{proof}[Second proof]
By \cite[Theorem 2.5]{Als}, this kernel parametrises the isomorphism classes of isotopes of $A$. But from \cite[Corollary 60]{P},
it follows that the invariants mod 2 and 3 of any isotope of the split Albert algebra $A$ are trivial. Thus all such isotopes are isomorphic, and the above kernel
is trivial, as desired.
\end{proof}

\subsection{Subgroups and a cohomology class attached to a point in $A$}

Let $a\in A$. We  associate to it a subgroup of type ${\rm D}_4$ in ${\bf G}$ and a $2$-dimensional torus
as follows.
Let $L\subset A$ be a subalgebra generated by $a$. It is known that $L$ is a  subfield 
since $A$ is a division algebra. Define the group ${\bf G}^L$ by
$$
{\bf G}^L(R)=\{ x\in {\bf G}(R) \ | \ x(l)=l \ \ \forall l\in L\otimes_KR\}
$$
for any $K$-ring $R$. Since ${\bf G}^L$ stabilizes $1\in L\subset A$ one gets ${\bf G}^L\subset {\bf H}\subset {\bf G}$.
It is known that over a separable closure of $K$ the group ${\bf G}^L$ is conjugate
to the standard subgroup in ${\bf G}$ of type ${\rm D}_4$ generated by roots $\alpha_2,\alpha_3,\alpha_4,
\alpha_5$. Therefore ${\bf S}'^L=C_{\bf G}({\bf G}^L)\subset {\bf G}$ is a $2$-dimensional torus over $K$. 
Using an explicit model of $A$ over a separable closure of $K$ 
one can easily verify that $${\bf Z}^L:={\bf S}'^L\cap {\bf H}={\bf S}'^L\cap 
{\bf G}^L$$ is the centre of ${\bf G}^L$. 

Let ${\bf S}^L\subset {\bf Str}(A)$ be a $3$-dimensional torus  generated by ${\bf S}'^L$ and ${\bf G}_m$.
Note that ${\bf S}^L\cdot {\bf G}^L$ is the connected component
of the stabilizer ${\bf N}^L={\bf Str}(A,L)$ defined by
$$
{\bf N}^L(R)={\bf Str}(A,L)(R)=\{ x\in {\bf Str}(A)(R) \ | \ x(L\otimes_KR)=L\otimes_KR\}
$$
for each $K$-ring $R$, and that over the separable closure of $K$ one has
${\bf N}^L/({\bf S}^L\cdot {\bf G}^L)\simeq S_3.$  Since the representatives 
of this quotient group can be chosen in ${\bf H}$, we have a natural identification
${\bf N}^L/({\bf S}^L\cdot {\bf G}^L)\simeq {\bf Aut}(L/K)$.

Our next goal is to describe a structure of ${\bf S}^L$. Before doing so,
we recall a standard fact about
actions of algebraic groups on varieties and the corresponding transporters. 

Let $L_1,L_2\subset A$ be two cubic \'etale (commutative) subalgebras. Consider
the transporter defined by
$$
{\bf Transp}(L_1,L_2)(R)=\{ x\in {\bf Str}(A)(R)\ | \ x(L_1\otimes_KR)=L_2\otimes_KR\}.
$$
for each $K$-ring $R$. It is an ${\bf N}^{L_1}$-torsor, hence it is represented by a cohomology
class $$
[\xi]\in {\rm Ker}[H^1(K,{\bf N}^{L_1})\longrightarrow H^1(K,{\bf Str}(A)].$$ 
By construction, this class is trivial if and only if $L_1$ and $L_2$
are conjugate over $K$. Furthermore, let $[\bar{\xi}]$ be the image of 
$[\xi]$ in 
$$H^1(K,{\bf N}^{L_1}/ ({\bf S}^{L_1}\cdot {\bf G}^{L_1}))= 
H^1(K,{\bf Aut}(L_1/K)).
$$
Then we have ${^{\bar{\xi}}L}_1\simeq L_2$. In particular, if $L_1\simeq L_2$,
then $[\bar{\xi}]$ is trivial, hence up to equivalence  in $H^1(K,{\bf N}^{L_1})$ 
we may assume that  $\xi$ takes values in 
${\bf S}^{L_1}\cdot {\bf G}^{L_1}$. This observation leads us to the following.
\begin{lemma}
Let $A=H_3(C,\Gamma)$  be a reduced Albert algebra, where 
$C$ is an octonion algebra over $K$ and $\Gamma\in (K^\times)^3$.
Let $L_1\subset A$ be the diagonal subalgebra and let $L_2\subset A$ be an arbitrary
 split cubic subalgebra. Then $L_1$ and $L_2$ are conjugate in ${\rm Str}(A)$.
\end{lemma}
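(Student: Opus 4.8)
The plan is to exploit the reduction already set up just before the statement. Since both $L_1$ and $L_2$ are split cubic étale subalgebras, each is isomorphic to $K\times K\times K$, so in particular $L_1\simeq L_2$. Hence, by the discussion preceding the lemma, the class $[\xi]\in {\rm Ker}\,[H^1(K,{\bf N}^{L_1})\to H^1(K,{\bf Str}(A))]$ representing the torsor ${\bf Transp}(L_1,L_2)$ has trivial image $[\bar\xi]$ in $H^1(K,{\bf Aut}(L_1/K))$, so we may assume $\xi$ takes values in the connected component ${\bf P}:={\bf S}^{L_1}\cdot{\bf G}^{L_1}$. Because $[\xi]$ vanishes if and only if $L_1$ and $L_2$ are conjugate over $K$, it suffices to prove that this class is trivial in $H^1(K,{\bf P})$.

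Next I would use that $L_1$ is the split diagonal subalgebra. Its three primitive idempotents $e_1,e_2,e_3$ are $K$-rational, and relative to the resulting Peirce decomposition of $A=H_3(C,\Gamma)$ the torus ${\bf S}^{L_1}$ is realized by the simultaneous $K$-rational scalings of the Peirce components; thus ${\bf S}^{L_1}$ is a $K$-split torus of rank $3$, and $H^1(K,{\bf S}^{L_1})=1$ by Hilbert's Theorem~90. Since ${\bf S}^{L_1}$ is central in ${\bf P}$ and meets ${\bf G}^{L_1}$ in its centre ${\bf Z}^{L_1}$, the central extension
\[1\longrightarrow {\bf S}^{L_1}\longrightarrow {\bf P}\longrightarrow {\bf G}^{L_1}/{\bf Z}^{L_1}\longrightarrow 1\]
gives an exact sequence of pointed sets whose initial term $H^1(K,{\bf S}^{L_1})$ is trivial; consequently the map $H^1(K,{\bf P})\to H^1(K,{\bf G}^{L_1}/{\bf Z}^{L_1})$ has trivial kernel, and the triviality of $[\xi]$ is reduced to that of its image $[\bar{\bar\xi}]$ in $H^1(K,{\bf G}^{L_1}/{\bf Z}^{L_1})$, i.e.\ in an adjoint group of type ${\rm D}_4$.

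The main obstacle is this last step. I would identify ${\bf G}^{L_1}$ with ${\bf Spin}(n_C)$ for the norm form $n_C$ of the octonion coordinate algebra $C$, so that $[\bar{\bar\xi}]$ is the class of an adjoint ${\rm D}_4$-torsor attached to the two coordinatisations $A\simeq H_3(C,\Gamma)$ and $A\simeq H_3(C',\Gamma')$ furnished by $L_1$ and $L_2$. The key input is that the octonion coordinate algebra of a reduced Albert algebra is an invariant of $A$, independent of the chosen frame; hence $C'\simeq C$ and the trialitarian data carried by $[\bar{\bar\xi}]$ is the distinguished one. One must also check that the residual discrepancy in the scaling parameters $\Gamma,\Gamma'$ contributes nothing, which is exactly what the split torus ${\bf S}^{L_1}$ absorbs. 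As a consistency check one notes that $[\bar{\bar\xi}]$ lies in the kernel of the map to $H^1(K,\overline{\bf G})$ for the adjoint group $\overline{\bf G}$ of type ${\rm E}_6$, coming from the triviality of $[\xi]$ in $H^1(K,{\bf Str}(A))$; the real content is to upgrade this to outright triviality of the ${\rm D}_4$-class, which is where the frame-independence of $C$ is indispensable.

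Granting $[\bar{\bar\xi}]=1$, the reduction of the second paragraph forces $[\xi]=1$, so ${\bf Transp}(L_1,L_2)$ has a $K$-point and $L_1,L_2$ are conjugate in ${\rm Str}(A)$, as claimed. The delicate points throughout are the structural identifications—namely that ${\bf S}^{L_1}$ is $K$-split and that ${\bf G}^{L_1}\simeq {\bf Spin}(n_C)$ with $C$ independent of the frame; once these are in place, the cohomological bookkeeping via the split central torus is routine.
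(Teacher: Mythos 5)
Your first two reductions are sound and parallel the paper: since $L_1\simeq L_2$ (both are split cubic \'etale), the transporter class $[\xi]$ may be assumed to lie in $H^1(K,{\bf S}^{L_1}\cdot{\bf G}^{L_1})$, and since ${\bf S}^{L_1}$ is a split central torus there with ${\bf S}^{L_1}\cap{\bf G}^{L_1}={\bf Z}^{L_1}$, Hilbert 90 and the cohomology sequence of your central extension show that $H^1(K,{\bf S}^{L_1}\cdot{\bf G}^{L_1})\to H^1(K,{\bf G}^{L_1}/{\bf Z}^{L_1})$ has trivial kernel. The gap is in the decisive last step: you claim the image $[\bar{\bar\xi}]$ in $H^1$ of the adjoint group of type ${\rm D}_4$ is trivial because the coordinate octonion algebra of a reduced Albert algebra is frame-independent, so $C'\simeq C$. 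But triviality of a torsor class under an adjoint group is strictly stronger than an isomorphism of the twisted objects: for instance ${\rm SL}(1,B)\simeq {\rm SL}(1,B^{\rm op})$ while $[B]\neq[B^{\rm op}]$ in $H^1(K,\gPGL_n)$ in general, and for ${\rm PGO}^+_8$ the same discrepancy is governed by the outer $S_3$ of triality --- exactly the symmetry in play here. Knowing $C'\simeq C$ only tells you that the twisted group is isomorphic to ${\bf G}^{L_1}/{\bf Z}^{L_1}$; it does not make the class vanish, and no argument bridging that difference is given.

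Moreover, by pushing forward to $H^1(K,{\bf G}^{L_1}/{\bf Z}^{L_1})$ you discard the one piece of information you actually possess, namely that $[\xi]$ dies in $H^1(K,{\bf Str}(A))$: there is no homomorphism out of ${\bf G}^{L_1}/{\bf Z}^{L_1}$ through which that triviality could be transported. Indeed, even your ``consistency check'' map to $H^1(K,\overline{\bf G})$ does not exist, since it would require ${\bf Z}^{L_1}\subset{\bf G}_m$, whereas ${\bf Z}^{L_1}\simeq\gmu_2\times\gmu_2$ is not contained in the one-dimensional torus of homotheties. The paper's proof goes the other way and stays inside ${\bf Str}(A)$: since ${\bf S}^{L_1}$ is split, the group ${\bf S}^{L_1}\cdot{\bf G}^{L_1}$ (being the centralizer of a split torus) is a Levi subgroup of a parabolic ${\bf P}\subset{\bf Str}(A)$, and the two standard facts --- that $H^1(K,\mathrm{Levi})\to H^1(K,{\bf P})$ and $H^1(K,{\bf P})\to H^1(K,{\bf Str}(A))$ both have trivial kernel --- convert the known triviality of $[\xi]$ in $H^1(K,{\bf Str}(A))$ directly into $[\xi]=1$. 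To salvage your route you would have to prove the ${\rm D}_4$-triviality by hand, via the trialitarian description of $H^1(K,{\rm PGO}^+(n_C))$ and the Peirce data attached to the two frames, which is genuinely harder than the Levi--parabolic argument.
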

\begin{proof} By the discussion preceding the lemma, the  obstacle for conjugacy is a cohomology class
$[\xi]\in H^1(K, {\bf S}^{L_1}\cdot {\bf G}^{L_1})$ which maps to the trivial class in
 $H^1(K,{\bf Str}(A))$. Since
$L_1$ consists of diagonal matrices in $H_3(C,\Gamma)$, it readily follows that ${\bf S}^{L_1}$
is a split $K$-torus, hence ${\bf S}^{L_1}\cdot {\bf G}^{L_1}$ 
is a Levi subgroup of a parabolic subgroup ${\bf P}\subset {\bf Str}(A)$. It is well known that
both arrows 
$$
H^1(K,{\bf S}^{L_1}\cdot {\bf G}^{L_1}) \longrightarrow H^1(K,{\bf P})
\longrightarrow H^1(K,{\bf Str}(A))
$$
have trivial kernel. It follows that $[\xi]=1$, as desired.
\end{proof}

We come back to an arbitrary Albert algebra $A$ over $K$, and a cubic subfield $K\subset L\subset A$.

\begin{proposition}  
One has ${\bf S}^L\simeq R_{L/K}({\bf G}_m)$ and ${\bf S}'^L\simeq R_{L/K}^{(1)}({\bf G}_m)$.
\end{proposition}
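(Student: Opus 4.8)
The plan is to exploit the anti-equivalence between $K$-tori and their character lattices viewed as $\Gamma_K$-modules, where $\Gamma_K=\Gal(K_s/K)$ for a separable closure $K_s$ of $K$: it suffices to compute $X^*({\bf S}^L)$ and $X^*({\bf S}'^L)$ together with their Galois action and match them with the module defining $R_{L/K}({\bf G}_m)$ and its quotient by the norm. Recall that the character lattice of $R_{L/K}({\bf G}_m)$ is the permutation module $\Z[\Hom_K(L,K_s)]$ on the three embeddings of $L$, while that of $R^{(1)}_{L/K}({\bf G}_m)$ is its quotient by the diagonal copy of $\Z$. So everything reduces to identifying a Galois-stable labelling of the relevant characters by $\Hom_K(L,K_s)$.

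First I would pass to $K_s$, where $L\tens_K K_s\simeq K_s\times K_s\times K_s$ splits into three primitive orthogonal idempotents $e_1,e_2,e_3$ permuted by $\Gamma_K$ exactly as the set $\Hom_K(L,K_s)$. Relative to these idempotents $A\tens_K K_s$ acquires a Peirce decomposition $\bigoplus_i K_se_i\oplus\bigoplus_{i<j}A_{ij}$, in which ${\bf G}^L$, fixing $L$ pointwise, fixes each line $K_se_i$ and acts on the three $8$-dimensional spaces $A_{ij}$ through the pairwise inequivalent vector and half-spin representations of ${\rm D}_4$ exchanged by triality. By Schur's lemma the centraliser ${\bf S}'^L=C_{\bf G}({\bf G}^L)$ acts on each $A_{ij}$ by a scalar; indexing these spaces by the complementary index, write $s_1,s_2,s_3$ for the corresponding characters. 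Since $\Gamma_K$ permutes the $A_{ij}$, hence the $s_i$, in the same way as the idempotents, the $s_i$ form a $\Gamma_K$-set isomorphic to $\Hom_K(L,K_s)$.

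Next I would read off the relations from the norm $N$. Over $K_s$, $N$ is the standard cubic form on $H_3(C,\Gamma)$, whose monomials are the diagonal product, the three terms pairing $K_se_i$ with the complementary $A_{jk}$, and the trilinear ``determinant'' term on $A_{12}\times A_{13}\times A_{23}$. A direct bookkeeping shows that an element of ${\bf S}^L$ acting by $s_i$ on $A_{jk}$ must act by $(\prod_k s_k)\,s_i^{-2}$ on $K_se_i$, and that its multiplier equals $\nu=\prod_i s_i$. Consequently $g\mapsto(s_1,s_2,s_3)$ identifies ${\bf S}^L$ with ${\bf G}_m^3$, carrying $\nu$ to the product map; this is an isomorphism of $K$-tori by the $\Gamma_K$-equivariance established above, so ${\bf S}^L\simeq R_{L/K}({\bf G}_m)$ with $\nu$ corresponding to $N_{L/K}$. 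Restricting to the kernel of the multiplier recovers the norm-one subtorus $\{\prod_i s_i=1\}$, which by dimension equals ${\bf S}'^L={\bf S}^L\cap{\bf G}$, that is $R^{(1)}_{L/K}({\bf G}_m)$.

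The main point to secure is the Galois-equivariance in the penultimate step: one must check that the descent datum on the off-diagonal Peirce components is genuinely the permutation action on $\Hom_K(L,K_s)$, and that the characters used are those of the action on the $A_{ij}$ rather than on the diagonal lines $K_se_i$. On the latter the same element acts by $(\prod_k s_k)\,s_i^{-2}$, so reading the torus off the diagonal would only yield an isogeny (a squaring) onto the norm-one torus rather than an isomorphism. The irreducibility and pairwise inequivalence of the three ${\rm D}_4$-modules $A_{ij}$, so that Schur forces independent scalars, together with the compatibility of the multiplier with $N_{L/K}$, are the remaining facts to verify; all of these are readily checked on the split model $H_3(C,\Gamma)$ over $K_s$.
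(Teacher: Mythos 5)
Your argument is correct, but it follows a genuinely different route from the paper's. The paper proceeds indirectly: it uses the fact that a torus is determined by its minimal splitting field together with the conjugacy class of the image of the Galois group in $\mathrm{GL}_n(\mathbb{Z})$, then shows in two steps that ${\bf S}'^L$ and $R^{(1)}_{L/K}({\bf G}_m)$ have the \emph{same} minimal splitting field (using triviality of the mod $2$ and mod $3$ invariants over suitable extensions, the conjugacy lemma for split cubic subalgebras of reduced algebras, and Tits' classification of isotropic groups of type $^1\mathrm{E}_6$), and finally that the two Galois images in $\mathrm{GL}_2(\mathbb{Z})$ are conjugate, via the classification of maximal finite subgroups of $\mathrm{GL}_2(\mathbb{Z})$ (the two copies of $S_3$ inside $D_6$ are not conjugate there but become conjugate in $\mathrm{GL}_2(\mathbb{Z})$). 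You instead compute the character lattice of ${\bf S}^L$ with its Galois action directly, from the Peirce decomposition of $A\otimes_K K_s$ relative to the idempotents of $L\otimes_K K_s$, Schur's lemma applied to the three pairwise inequivalent $8$-dimensional $\mathrm{D}_4$-modules, and bookkeeping on the monomials of the cubic norm. Your approach buys an explicit isomorphism rather than an abstract one --- in particular the identification of the multiplier $\nu|_{{\bf S}^L}$ with $N_{L/K}$, which makes the map $\phi\colon{\bf S}^L\to L^\times$ and Remark~\ref{remark} transparent --- and it avoids both the Tits classification and the $\mathrm{GL}_2(\mathbb{Z})$ case analysis; the paper's argument, in exchange, avoids all coordinate computations. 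The two verifications you flag are indeed the ones to write out, and both go through: (i) Schur's lemma alone only gives scalars on the off-diagonal spaces $A_{jk}$, since the diagonal is a $3$-dimensional trivial isotypic component on which the centralizer could a priori act by any element of $\mathrm{GL}_3$; it is the cross terms of $N$ pairing each $\xi_i$ with the octonion norm of the complementary component $c_i$ that force this action to preserve each line $K_se_i$ and to equal $\bigl(\prod_k s_k\bigr)s_i^{-2}$ there; (ii) surjectivity onto ${\bf G}_m^3$, i.e.\ that every $(s_1,s_2,s_3)$ yields an element of ${\bf S}^L$ and not merely of $C_{{\bf Str}(A)}({\bf G}^L)$: since every monomial of $N$ then scales by $\nu=\prod_i s_i$, these scalings form a $3$-dimensional connected subtorus of the centralizer, which therefore contains, hence equals, ${\bf S}^L$.
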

\begin{proof} 
Let $F/K$ (resp. $F'/K$) be the minimal splitting field of $ R_{L/K}^{(1)}({\bf G}_m)$ 
(resp. ${\bf S}'^L$).
Let $\Gamma={\rm Gal}(F/K)$ and $\Gamma'={\rm Gal}(F'/K)$. These two Galois groups act naturally on
the corresponding character lattices of the tori $R_{L/K}^{(1)}({\bf G}_m)$ and
${\bf S}'^L$, respectively, and hence both embed naturally into ${\rm GL}_2(\mathbb{Z})$.
Recall that an arbitrary $n$-dimensional torus is determined uniquely by its minimal splitting field and 
the conjugacy class of the image of  the corresponding Galois group in ${\rm GL}_n(\mathbb{Z})$. 

\smallskip

\noindent
{\it Step $1$}: $F'\subset F$. The base extension $F/K$ completely splits $L$. Hence the $g_3$-invariant of $A$ becomes trivial over $F$,
implying that $A_F\simeq H_3(C,\Gamma)$  where $C$ is the 
octonion algebra over $F$ corresponding
to $f_3(A_F)$, and $\Gamma\in (F^\times)^3$. By the above lemma, up to conjugacy we may assume that $L\otimes_KF\subset A_F$ coincides with the diagonal subalgebra.
Explicit computations show that $({\bf S}^L)_F$ is a split $3$-dimensional torus, and that so is $({\bf S}'^L)_F$. 

\smallskip

\noindent
{\it Step $2$}: $F\subset F'$.  Since the field extension $F'/K$ splits ${\bf S}^L$ it follows 
from Tits' classification of isotropic simple algebraic groups of type $^1\mathrm{E}_6$
that the semisimple part ${\bf G}^L $ of 
$C_{{\bf Str}(A)}({\bf S}^L)$ is isomorphic to ${\bf Spin}(f)$ where
$f$ is the $3$-fold Pfister form as above, which is either split over $F'$
or splits over a quadratic field extension
$E/F'$. Therefore, the same holds for ${\bf Str}(A)$ and for $A$.
In both cases the $g_3$-invariant of $A_{F'}$, being an invariant mod 3, is trivial,
implying that $A_{F'}\simeq H_3(C,\Gamma)$ for an octonion $F'$-algebra $C$ and $\Gamma\in (F'^\times)^3$. Since all maximal split tori in ${\bf Str}(A_{F'})$ are conjugate 
over $F'$, we may assume that, up to conjugacy, the split torus $({\bf S}^L)_{F'}$ preserves the diagonal subalgebra of $H_3(C,\Gamma)$;
then so does its centralizer in ${\bf Str}(A_{F'})$. Furthermore, explicit computations show that
$$
({\bf G}^L)_{F'}=[C_{\bf{Str}(A_{F'})}({\bf S}^L)_{F'}), C_{\bf{Str}(A_{F'})}({\bf S}^L)_{F'})]
$$ 
acts trivially on the diagonal. 
Since $L$ consists of precisely those points of $A$ fixed pointwise under the action of ${\bf G}^L$, it follows that $L\otimes_K F'\subset A_{F'}$
coincides with the diagonal. Thus $L$ is split over $F'$. 

\smallskip

\noindent
{\it Step $3$}: {\it the images of $\Gamma$ and $\Gamma'$ in ${\rm GL}_2(\mathbb{Z})$ are conjugate}.
Recall that up to conjugacy the group ${\rm GL}_2(\mathbb{Z})$ has two maximal finite subgroups, isomorphic to 
the dihedral groups ${\rm D}_4$ and $D_6\simeq D_3\times \mathbb{Z}/2$. Since the orders of $\Gamma$
and $\Gamma'$ are divisible by $3$ we may, up to conjugacy, assume that
$\Gamma,\Gamma'\subset D_6\subset {\rm GL}_2(\mathbb{Z})$. If $|\Gamma|=|\Gamma'|=3$, then these groups coincide
with the unique $3$-Sylow subgroup of $D_6$ and we are done. Otherwise 
$\Gamma$ and $\Gamma'$ have order $6$, hence
$\Gamma\simeq \Gamma'\simeq S_3$.
The group $D_6\simeq D_3\times \mathbb{Z}/2\simeq S_3\times \mathbb{Z}/2$ has two subgroups isomorphic
to $S_3$. They are not conjugate in $D_6$, but explicit computations shows that they are conjugate in ${\rm GL}_2(\mathbb{Z})$. This completes the proof.
\end{proof}

In the proof we used the fact that the inclusion
$$
L\subset \{ a\in A \ | \ g(a)=a\ \ \forall g\in {\bf G}^L(K)\}
$$
is an equality.
Hence the restriction of the action of ${\bf Str}(A)$ (on $A$) at ${\bf S}^L$ defines an action
${\bf S}^L\times L^\times \to L^\times$. Since ${\bf S}^L\cap {\bf H}=
{\bf S}'^L\cap {\bf H}={\bf Z}^L$, the 
stabilizer of $1\in L^\times$ in ${\bf S}^L$ is 
equal to ${\bf Z}^L$.
It follows that over a separable closure of $K$  the action ${\bf S}^L\times L^\times\to L^\times$ is transitive.
Thus we may identify the varieties ${\bf S}^L/{\bf Z}^L\simeq L^\times$. The exact sequence
$$
1\longrightarrow {\bf Z}^L \longrightarrow {\bf S}^L \overset{\phi}{\longrightarrow} L^\times \longrightarrow 1
$$
and the fact that $H^1(K,{\bf S}^L)=1$ imply that we have a canonical surjective map
$L^\times \to H^1(K,{\bf Z}^L)$. 
Thus to each point $a\in L^\times$ we can attach its image 
 $[\xi_a]\in H^1(K,{\bf Z}^L)$. 
 
\begin{remark}\label{remark} 
By construction, the map $\phi$ is the restriction of the map \linebreak ${\bf Str}(A)\to U$ 
sending $g$ to $g(1)$. 
\end{remark}
 
\section{$R$-triviality}

We keep the notation introduced in Section \ref{prel}; in particular $U\subset A$ is the open subset 
consisting
of the elements $a$ in $A$ whose norm $N(a)$ is invertible.

\begin{proposition}\label{transitivity} 
 Let $A$ be an Albert algebra arising from the first Tits construction.
Then the group ${\rm Str}(A)$ acts  transitively on $U$.
\end{proposition}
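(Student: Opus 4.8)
The plan is to convert transitivity into a statement about Galois cohomology and then to prove the latter by descent from a splitting field. Over a separable closure $U$ is a single orbit: the norm-one hypersurface $\{N=1\}$ is the homogeneous space ${\bf G}/{\bf H}$, and the central ${\bf G}_m$ of homotheties produces every nonzero norm, so ${\bf Str}(A)$ acts transitively on $U$ over $\overline{K}$ with stabilizer of $1$ equal to ${\bf H}$. By the twisting argument used for split $A$, the ${\rm Str}(A)$-orbits on $U(K)$ are classified by ${\rm Ker}[H^1(K,{\bf H})\to H^1(K,{\bf Str}(A))]$. Concretely, for $a\in U(K)$ let $L=K[a]$, which (after enlarging if necessary) we take to be a cubic subfield; combining the exact sequence $1\to{\bf Z}^L\to{\bf S}^L\xrightarrow{\phi}L^\times\to1$, the vanishing $H^1(K,{\bf S}^L)=1$ (from ${\bf S}^L\simeq R_{L/K}({\bf G}_m)$), and Remark \ref{remark}, one sees that $a$ lies in the orbit of $1$ if and only if the attached class $[\xi_a]\in H^1(K,{\bf Z}^L)$ becomes trivial in $H^1(K,{\bf H})$. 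If already $[\xi_a]=1$ in $H^1(K,{\bf Z}^L)$ this is immediate, since then $a=\phi(s)$ for some $s\in{\bf S}^L(K)\subset{\rm Str}(A)$. Thus the proposition reduces to showing that every such $[\xi_a]$ dies in $H^1(K,{\bf H})$.

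The geometric input is that $U$ is $R$-trivial as a variety: for $a\in U(K)$ the affine line $\ell(t)=(1-t)\cdot 1+t\cdot a$ meets the cubic hypersurface $\{N=0\}$ in only finitely many points, hence defines a rational map $\mathbb{A}^1\dashrightarrow U$ regular at $0$ and $1$ with values $1$ and $a$. Since $1,a\in L$, this line lies in $L^\times$ and produces a one-parameter family of classes $[\xi_{\ell(t)}]$ joining $[\xi_1]=1$ to $[\xi_a]$; this is the family that will witness $R$-triviality once we work over a field for which the relevant kernel condition holds.

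I would then pass to a splitting field $F$ of $L$. As in Step $1$ of the preceding Proposition the $g_3$-invariant dies over $F$, so $A_F\simeq H_3(C,\Gamma)$; and because $A$ arises from the first Tits construction its mod $2$ invariant $f_3$ vanishes, so $C$ is split and $A_F$ is the split Albert algebra. By the Lemma for split algebras, ${\rm Str}(A_{F'})$ acts transitively on $U_{F'}$ for every $F'/F$, so over $F$ and over $F(t)$ both $[\xi_a]$ and the whole family $[\xi_{\ell(t)}]$ become trivial in $H^1(F,{\bf H})$ and $H^1(F(t),{\bf H})$ respectively. This exhibits $\mathrm{res}_F[\xi_a]$ as an $R$-trivial class vanishing in $H^1(F,{\bf H})$; Gille's norm principle (Theorem \ref{normprinciple}), applied to the central subgroup ${\bf Z}^L$ of the $\mathrm{D}_4$-group ${\bf G}^L$, together with the facts that ${\bf Z}^L$ is killed by $2$ and that for cyclic $L$ one has $F=L$ with $[F:K]=3$ odd, then forces $[\xi_a]$ itself to die over $K$, so that $a$ lies in the orbit of $1$.

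The crux, and the main obstacle, is precisely this descent. Two difficulties must be overcome. First, $[\xi_a]$ is detected by its vanishing in $H^1(K,{\bf H})$, whereas ${\bf Z}^L$ is central in ${\bf G}^L$ but \emph{not} in ${\bf H}$ (the group of type ${\rm F}_4$ has trivial centre); reconciling the level at which the class vanishes with the central pair to which a norm principle applies is the heart of the matter, and is where the structural features of first Tits constructions and the corresponding $R$-triviality result for the structure group are indispensable. Second, when $L/K$ is a non-Galois cubic its splitting field has degree $6$, so the naive composite $\mathrm{cor}_F^K\mathrm{res}_F=[F:K]$ annihilates the $2$-torsion class $[\xi_a]$ and yields nothing; this forces a two-step descent isolating the cyclic cubic layer from the quadratic resolvent, exploiting that over the resolvent $a$ becomes partially diagonal so that the norm principle can be applied one prime at a time.
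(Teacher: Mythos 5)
Your reduction of transitivity to the vanishing of the classes $[\xi_a]\in H^1(K,{\bf Z}^L)$ in $H^1(K,{\bf H})$ matches the paper, but the descent that you yourself call ``the crux'' is exactly what is missing, and the route you sketch for it does not work. You pass to the Galois splitting field $F$ of $L$, and then, as you admit, for a non-Galois cubic $L$ one has $[F:K]=6$, so $\mathrm{cor}^F_K\circ\mathrm{res}_F$ is multiplication by $6$, which annihilates the exponent-$2$ group $H^1(K,{\bf Z}^L)$ and yields nothing; the ``two-step descent isolating the cyclic cubic layer from the quadratic resolvent'' that you invoke to repair this is never carried out, and your first difficulty (that the class is detected in $H^1(K,{\bf H})$, while the norm principle applies to the pair $({\bf Z}^L,{\bf G}^L)$) is likewise left unresolved. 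A proof cannot delegate its central step to an unproved assertion, so as it stands this is a genuine gap, not a complete argument.

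The paper's proof avoids both of your difficulties with one observation you did not make: restrict to $L$ itself, not to its Galois closure. Since $L$ embeds in $A$, the algebra $A\otimes_K L$ contains $L\otimes_K L$, which has nontrivial idempotents, so $A_L$ is reduced and its $g_3$-invariant vanishes; since $A$ arises from the first Tits construction, its mod-$2$ invariants vanish as well; hence $A_L$ is split and ${\bf H}_L$ is the split group of type ${\rm F}_4$. Proposition \ref{splitD_4} (which you never invoke) then shows that $({\bf G}^L)_L$ is quasi-split over $L$, so its centre $({\bf Z}^L)_L$ sits inside a maximal quasi-split torus with trivial $H^1$; consequently \emph{every} class in $H^1(L,({\bf Z}^L)_L)$ lies in $\mathrm{Ker}\,[H^1(L,({\bf Z}^L)_L)\to H^1(L,({\bf G}^L)_L)]$ and is $R$-trivial by Example \ref{quasi-split}. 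Gille's norm principle (Theorem \ref{normprinciple}) then gives
$\mathrm{cor}^L_K(\mathrm{res}_L[\xi])\in\mathrm{Ker}\,[H^1(K,{\bf Z}^L)\to H^1(K,{\bf G}^L)]$, and since $[L:K]=3$ while $H^1(K,{\bf Z}^L)$ has exponent $2$, the composite $\mathrm{cor}^L_K\circ\mathrm{res}_L$ is the identity. Thus every class in $H^1(K,{\bf Z}^L)$ dies already in $H^1(K,{\bf G}^L)$ --- a statement \emph{stronger} than vanishing in $H^1(K,{\bf H})$, which disposes of your ``level'' difficulty at no cost, since ${\bf G}^L\subset{\bf H}$. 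Note also that your affine-line family $\ell(t)$ plays no role: the $R$-triviality required by Gille's theorem is supplied over $L$ by quasi-splitness of $({\bf G}^L)_L$, not by paths in $U$.
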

\begin{proof} Recall that the stabilizer of $1\in A$ is ${\bf H}$. Thus we may identify
the variety ${\bf G}/{\bf H}$ with $U$. It follows from the exact sequence
$$1\longrightarrow {\bf H} \longrightarrow {\bf Str}(A) \longrightarrow U \longrightarrow 1
$$
that  
there is a canonical one-to-one correspondence between the set
of ${\rm Str}(A)$-orbits in $U$ and the set
$$
{\rm Ker}\,[H^1(K,{\bf H}) \longrightarrow H^1(K, {\bf Str}(A)].
$$
Therefore  our assertion amounts to proving triviality of the above kernel. 

Let $a\in U$ and let 
$$
[\eta]\in {\rm Ker}\,[H^1(K,{\bf H})\longrightarrow H^1(K,{\bf Str}(A))]$$ be the corresponding cohomology class. We take a cubic subfield $L$ containing $a$,
 construct the class $[\xi_a]\in H^1(K,{\bf Z}^L)$ as in the end of the previous section, and write $[\xi]_{{\bf H}}$ for
its image under the composition  
$$H^1(K,{\bf Z}^L)\to H^1(K,{\bf G}^L)\to H^1(K,{\bf H}).$$ By Remark \ref{remark}, $[\eta]=[\xi]_{{\bf H}}$.
The following claim completes the proof of the proposition.

\medskip

\noindent
{\it Claim}: {\it the map $H^1(K,{\bf Z}^L)\to H^1(K,{\bf H})$ is trivial}.

\medskip

\noindent
Indeed, let $[\xi]\in H^1(K,{\bf Z}^L)$.
Take its restriction $[\xi]_L\in H^1(L,(({\bf Z}^L)_L)$.
Upon extending scalars to $L$, the $g_3$-invariant of ${\bf H}$ becomes trivial. Since $A$ is obtained from the first Tits construction, its invariants mod 2 are trivial, and
thus the group ${\bf H}_L$ is split. Then
by Lemma~\ref{splitD_4}, the group ${\bf G}^L$ is quasi-split over $L$. Choose a maximal 
quasi-split torus
$T_L$ in ${\bf G}^L\otimes_K L$. It contains the centre $({\bf Z}^L)_L$ 
of $({\bf G}^L)_L$ and has trivial cohomology in dimension $1$. This implies that
$$
[\xi]_L\in {\rm Ker}\,[H^1(L,({\bf Z}^L)_L) \longrightarrow H^1(L,({\bf G}^L)_L)].
$$
By Example~\ref{quasi-split}, 
every element in $H^1(L,({\bf Z}^L)_L)$ is $R$-trivial 
and therefore the norm principle holds for each such element.
Thus
$$
cor^L_K(res_L([\xi]))\in {\rm Ker}\,[
H^1(K,{\bf Z}^L)\to H^1(K,{\bf G}^L).
$$ 
The claim follows upon noting that
$$
cor^L_K(res_L([\xi]))=[\xi]^3=[\xi],$$ because $H^1(K,{\bf Z}^L)$ is a group of exponent $2$. 
\end{proof}
In the course of the proof of the proposition we established the following.

\begin{corollary}\label{reduction} Let $F/K$ be an arbitrary field extension. 

\smallskip

\noindent
{\rm (a)} We have  ${\rm Ker}\,[H^1(F,{\bf H})\to H^1(F,{\bf Str}(A)]=1$. 

\smallskip

\noindent
{\rm (b)} The variety ${\bf H}\times U$ is birationally isomorphic to  ${\bf Str}(A)$; in particular
${\bf H}$ is $R$-trivial if and only if so is ${\bf Str}(A)$. 
\end{corollary}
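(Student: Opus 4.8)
The plan is to read off both parts from the proof of Proposition~\ref{transitivity}, the one new ingredient being that the argument there is insensitive to the base field. For part~(a) I would first record that the first Tits construction is stable under scalar extension: writing $A=J(D,\mu)$ gives $A\otimes_K F\simeq J(D\otimes_K F,\mu)$ with $D\otimes_K F$ still central simple of degree $3$, so $A_F$ again arises from the first Tits construction and its invariants mod~$2$ remain trivial over every extension of $F$. I would then distinguish two cases. If $A_F$ is a division algebra, I would rerun the proof of Proposition~\ref{transitivity} verbatim with $K$ replaced by $F$: for a suitable $a\in U(F)$ the subalgebra it generates is a cubic subfield $L$, passage to $L$ splits ${\bf H}_F$, Lemma~\ref{splitD_4} makes ${\bf G}^L$ quasi-split over $L$, and the norm-principle computation $cor^L_F\,res_L([\xi])=[\xi]^3=[\xi]$ (using that $H^1(F,{\bf Z}^L)$ has exponent~$2$) forces $H^1(F,{\bf Z}^L)\to H^1(F,{\bf H})$ to be trivial. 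If $A_F$ is not a division algebra, then---being reduced and from the first Tits construction, hence with split octonion algebra and vanishing mod-$2$ invariants---it is split, and the vanishing of the kernel is the split-$A$ lemma already proved in Section~\ref{prel}. In either case ${\rm Ker}[H^1(F,{\bf H})\to H^1(F,{\bf Str}(A))]=1$.

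For part~(b) I would interpret the exact sequence $1\to{\bf H}\to{\bf Str}(A)\to U\to1$ as exhibiting $\pi\colon{\bf Str}(A)\to U$, $g\mapsto g(1)$, as an ${\bf H}$-torsor over $U$. Pushing it out along ${\bf H}\hookrightarrow{\bf Str}(A)$ gives the trivial ${\bf Str}(A)$-torsor, since $(g,g')\mapsto(\pi(g),gg')$ supplies a tautological trivialization; hence the class of the generic fibre lies in ${\rm Ker}[H^1(K(U),{\bf H})\to H^1(K(U),{\bf Str}(A))]$. By part~(a) applied to $F=K(U)$ this class is trivial, so $\pi$ is generically trivial, which is exactly a birational isomorphism ${\bf Str}(A)\simeq{\bf H}\times U$.

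To conclude I would use that $U$ is a dense open subset of the affine space $A\simeq\A^{27}$, hence $K$-rational, so that ${\bf Str}(A)$ is birationally isomorphic to ${\bf H}\times\A^{27}$; since $R$-triviality of a smooth connected group is unaffected by a rational direct factor and is a birational invariant (cf.\ \cite{CTS}), ${\bf H}$ is $R$-trivial if and only if ${\bf Str}(A)$ is. I expect the real obstacle to sit in precisely this last transfer: one must check that the functor $F\mapsto(-)(F)/R$ is genuinely unchanged both by the rational factor $U$ and across the birational isomorphism---equivalently, that every $R$-equivalence class is already represented on the common dense open subvariety. The preceding steps (base-change stability of the construction and generic triviality of the torsor) are routine, so it is this comparison of $R$-equivalence classes, resting on the rationality of $U$, that carries the content.
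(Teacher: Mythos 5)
Your proposal is correct and takes essentially the same route as the paper: the paper's own proof of this corollary consists of observing that part (a) was already established in the course of proving Proposition~\ref{transitivity} (an argument insensitive to the base field) and that (b) follows immediately from (a). What you add—base-change stability of the first Tits construction, the explicit division/split dichotomy over $F$, the contracted-product trivialization yielding the rational section, and the appeal to the standard facts on $R$-equivalence of groups from \cite{CTS}—is precisely the content the paper leaves implicit, correctly filled in.
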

\begin{proof} (a) was already established and (b) follows immediately from (a).
\end{proof} 
\begin{corollary} Let $K$ be a field of characteristic $\not=2,3$. 
Let ${\bf H}$ be an arbitrary algebraic group of type ${\rm F}_4$ over $K$ arising from
the first Tits construction. Then ${\bf H}$ is $R$-trivial.
\end{corollary}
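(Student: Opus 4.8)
The plan is to read this corollary as a restatement of the main Theorem and to reduce the $R$-triviality of ${\bf H}$ to that of the structure group ${\bf Str}(A)$. First I would invoke the equivalence between Albert algebras and groups of type ${\rm F}_4$ to write ${\bf H}={\bf Aut}(A)$, where the hypothesis that ${\bf H}$ arises from the first Tits construction means $A=J(D,\mu)$ for a central simple $K$-algebra $D$ of degree $3$ and a scalar $\mu\in K^\times$.

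I would then distinguish two cases. If $A$ is reduced --- equivalently the norm $N$ is isotropic, equivalently the associated simply connected group of type ${\rm E}_6$ is isotropic --- then $R$-triviality is already covered by the results for split and isotropic groups recalled in the introduction, so nothing further is needed. This leaves the essential case in which $A$ is a division algebra; here the standing anisotropy assumption of Section~\ref{prel} is in force and Corollary~\ref{reduction} is available. By part~(b) of that corollary the variety ${\bf H}\times U$ is birationally isomorphic to ${\bf Str}(A)$, so ${\bf H}$ is $R$-trivial precisely when ${\bf Str}(A)$ is. It therefore suffices to prove that ${\bf Str}(A)$ is $R$-trivial, and this is exactly the corresponding result on the structure group of Albert algebras arising from the first Tits construction, valid under our hypotheses on the characteristic. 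Combining the two statements yields the $R$-triviality of ${\bf H}$.

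The final deduction is formal; the substantive input is twofold. On the one hand one needs the $R$-triviality of the structure group itself, whose proof is the genuinely deep ingredient imported from outside. On the other hand, within the present paper the work sits in Corollary~\ref{reduction}, namely the vanishing of ${\rm Ker}[H^1(F,{\bf H})\to H^1(F,{\bf Str}(A))]$ for all $F/K$, which is where I expect the main difficulty to lie: it relies on the splitting of ${\bf H}_L$ over a cubic subfield $L$, the quasi-splitness of ${\bf G}^L$ furnished by Proposition~\ref{splitD_4}, the $R$-triviality of classes in $H^1(L,({\bf Z}^L)_L)$ in the quasi-split situation (Example~\ref{quasi-split}), Gille's norm principle (Theorem~\ref{normprinciple}), and finally the exponent-$2$ computation $cor^L_K(res_L([\xi]))=[\xi]^3=[\xi]$ that forces the class to be trivial. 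Once that chain is in place, as it is in the excerpt above, the corollary follows immediately.
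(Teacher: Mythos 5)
Your proposal is correct and takes essentially the same route as the paper: the paper's proof likewise invokes Thakur's theorems for the $R$-triviality of ${\bf Str}(A)$ and concludes via part (b) of Corollary~\ref{reduction}. Your explicit treatment of the reduced (isotropic) case merely spells out the standing anisotropy assumption the paper fixes once and for all in Section~\ref{prel}, so it is a harmless (indeed careful) addition rather than a different argument.
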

\begin{proof} Let $A$ be the corresponding Albert algebra. By~\cite{Thakur1} and \cite{Thakur2}, 
the structure group 
${\bf Str}(A)$ is $R$-trivial. Hence the assertion follows from the above corollary.
\end{proof} 

\begin{remark}
In a forthcoming paper \cite{ACP}, we will deal with automorphism and structure groups of arbitrary Albert algebras.
\end{remark}

\end{document}